\documentclass[12pt,reqno]{amsart}
\usepackage{amssymb}
\usepackage{graphicx}
\usepackage{amsmath}
\usepackage{a4wide}
\usepackage{version}
\usepackage{mathrsfs}  
\usepackage{tikz}
\usepackage{mathtools}
\usetikzlibrary{arrows,decorations.markings}
\usepackage{hyperref}


\newtheorem{thm}{Theorem}
\newtheorem{lem}[thm]{Lemma}
\newtheorem{prop}[thm]{Proposition}
\newtheorem{cor}[thm]{Corollary}

\theoremstyle{definition}
\newtheorem{definition}[thm]{Definition}
\newtheorem{example}[thm]{Example}
\newtheorem{rmk}[thm]{Remark}

\headsep 23pt
\footskip 35pt


\renewcommand{\k}{\mathbf{k}}
\renewcommand{\t}{\mathbf{t}}

\newcommand{\K}{\mathbb{K}}
\renewcommand{\d}{\partial}

\renewcommand{\r}{\mathfrak{r}} 
\renewcommand{\H}{\mathfrak{H}}

\title{Fukaya categories of plumbings and multiplicative preprojective algebras}
\author{Tolga Etg\"u}  
\address{Department of Mathematics, Ko\c{c} University, Sariyer, Istanbul 34450 TURKEY}
\email{tetgu@ku.edu.tr}
\author{Yank\i\ Lekili}
\address{King's College London, Strand, London, UK}
\email{yanki.lekili@kcl.ac.uk}
\date{} 

\begin{document}

\begin{abstract} Given an arbitrary graph $\Gamma$ and non-negative integers $g_v$ for each vertex
    $v$ of $\Gamma$, let $X_\Gamma$ be the Weinstein $4$-manifold
    obtained by plumbing copies of $T^*\Sigma_v$ according to this graph, where $\Sigma_v$ is a
    surface of genus $g_v$. We compute the wrapped Fukaya
    category of $X_\Gamma$ (with bulk parameters) using Legendrian surgery extending our previous
    work \cite{EL} where it was assumed that $g_v=0$ for all $v$ and $\Gamma$ was a tree. The resulting
    algebra is recognized as the (derived) multiplicative preprojective algebra (and its higher
    genus version) defined by
    Crawley-Boevey and Shaw \cite{C-BS}. Along the way, we find a smaller model for the internal DG-algebra of Ekholm-Ng \cite{EkNg} associated to $1$-handles in the Legendrian
    surgery presentation of Weinstein $4$-manifolds which might be of independent interest.     
\end{abstract}

\maketitle

\section{Introduction}

In this paper, we continue our study from \cite{EL} of computing explicit models for the wrapped
Fukaya category $\mathcal{W}(X_\Gamma)$ of the 4-dimensional open symplectic manifold
$X_\Gamma$, obtained by plumbing cotangent bundles of surfaces according to a graph $\Gamma$.
Previously in \cite{EL}, we restricted to case of plumbings of cotangent bundles of 2-spheres and $X_\Gamma$ was exhibited as a Legendrian surgery along a link $L_\Gamma
\subset (S^3, \xi_{std})$ and the Chekanov-Eliashberg DG-algebra
$\mathscr{B}_\Gamma$ of $L_\Gamma$ was computed explicitly for any tree $\Gamma$. 
It is known that $\mathscr{B}_\Gamma$ is
$A_\infty$-quasi-isomorphic to the endomorphism algebra of the full subcategory of $\mathcal{W}(X_\Gamma)$ whose objects are the cocores of the 2-handles in the surgery presentation \cite{BEE} (cf. \cite[Thm. 2]{EkLe}).  Furthermore, a combination of results of \cite{BEE} and \cite{Ab} implies that these cocores split-generate $\mathcal{W}(X_\Gamma)$.
A proof of the stronger statement of generation (rather than split-generation) is recently given by Chantraine, Dimitroglou-Rizell, Ghiggini, Golovko \cite{CDGG}. Thus, we take $\mathscr{B}_\Gamma$ as our primary object of interest.

We improve on the results of \cite{EL} in two aspects. Firstly, no restriction is imposed on $\Gamma$ which was assumed to be a tree in
\cite{EL} and only plumbings of $T^*S^2$'s were considered. When $\Gamma$ is not a tree or cotangent bundles of positive genus surfaces are used, Legendrian surgery description of $X_\Gamma$ necessarily includes
$1$-handles, and the computation of $\mathscr{B}_\Gamma$ is more involved. More
precisely, for any connected graph $\Gamma$ with $s$ vertices and $s+k-1$ edges and $g= \sum_v g_v$, we shall construct a Legendrian
$L_\Gamma$ in $\#^{k+2g}(S^1 \times S^2, \xi_{std})$ with $s$ components such that
$X_\Gamma$ is obtained by Legendrian surgery on $L_\Gamma$.  A roadmap for an explicit computation of $\mathscr{B}_\Gamma$
is provided by the work of Ekholm-Ng \cite{EkNg} which involves a countably generated internal DG-algebra $\mathscr{I}$ associated to each 1-handle
in the surgery presentation. We 
find a simplification by proving that a finitely generated DG-subalgebra of
$\mathscr{I}$ is quasi-isomorphic to $\mathscr{I}$. 

Secondly, when $\Gamma$ is one of the $A_n$- and $D_n$-type trees (and conjecturally we also include
$E_6,E_7,E_8$-types), it was shown in \cite{EL} that $\mathscr{B}_\Gamma$ is quasi-isomorphic to
Ginzburg's DG-algebra $\mathscr{G}_\Gamma$, away from a small set of exceptional characteristics for the ground
field $\mathbb{K}$.  On the other hand, if $\Gamma$ is a non-Dynkin tree, the explicit computation in \cite{EL} shows that
$\mathscr{B}_\Gamma$ is a deformation
of $\mathscr{G}_\Gamma$. In this case, it is also known that
$\mathscr{G}_\Gamma$ is
quasi-isomorphic to $H^0(\mathscr{G}_\Gamma)= \Pi_\Gamma$, the (additive)
preprojective algebra attached to $\Gamma$ (in contrast, $\mathscr{G}_\Gamma$ has non-trivial cohomology in every non-positive degree,
when $\Gamma$ is a Dynkin tree). It was argued that the associated formal deformation is trivial over a
field of characteristic 0 since $\Pi_\Gamma$ has no formal deformations as a Calabi-Yau algebra, but the precise determination of quasi-isomorphism type of
$\mathscr{B}_\Gamma$ was not given.  

In this paper, we first restrict to the case $g_v=0$ and show that for any finite graph $\Gamma$,  $\mathscr{B}_\Gamma$ is quasi-isomorphic to the derived multiplicative preprojective algebra
$\mathscr{L}_\Gamma$ whose zeroth cohomology is isomorphic to the multiplicative preprojective
algebra $\Lambda_\Gamma$ defined in \cite{C-BS} by Crawley-Boevey and Shaw with motivation coming
from the Deligne-Simpson problem. Next, we extend this to the case where $g_v$ is
arbitrary. The resulting DG-algebra is quasi-isomorphic to a positive genus version of the derived multiplicative
algebra that appears in \cite{BK}. We shall denote this by $\mathscr{L}_{\Gamma, {\mathbf g}}$
and its zeroth cohomology $H^0(\mathscr{L}_{\Gamma, \mathbf{g}})$ by
$\Lambda_{\Gamma, \mathbf{g}}$, where $\mathbf{g}= (g_1,g_2,\ldots,g_s)$ is an $s$-tuple of non-negative integers recording the genus $g_v$ associated to
vertices of $\Gamma$. When $\mathbf{g}$ is omitted, it is understood that $g_v=0$ for all $v$. 

Before giving the definition of $\Lambda_{\Gamma, \mathbf{g}}$ and
$\mathscr{L}_{\Gamma, \mathbf{g}}$, let us first introduce some notation. 
As in \cite{EL}, we fix a field $\mathbb{K}$ and the semisimple ring 
\[ \k \coloneqq \bigoplus_{v=1}^{s} \mathbb{K} e_v \]
where $e_v^2 = e_v$ and $e_v e_w =0$ for $v\neq w$, and $\{ 1, \ldots , s\}$ is the set of vertices of $\Gamma$. 
Here, we shall work over a bigger ring:
\[ \k\langle \t^\pm \rangle = \k \langle t_1, t_1^{-1}, t_2, t_2^{-1},\ldots, t_s, t_s^{-1}
\rangle / (t_v t_v^{-1} = t_v^{-1} t_v = e_v , t^{\pm}_v e_w = e_w t_v^{\pm} = 0 \text{\ \ for\ \ } v \neq w
)\] 
We also fix a spanning tree $T \subset \Gamma$ and orient the edges of $\Gamma$. The multiplicative preprojective algebra $\Lambda_\Gamma$ is the associative algebra obtained from $\k \langle \t^\pm \rangle$ by adjoining free (non-commuting) generators
\begin{align*} 
    c_a, c_a^* & \ \ \text{for any arrow\ \ } a \in \Gamma , \text{ \ and \ }\\
    z_a, z_a^{-1} & \  \text{ for any arrow \  } a \in \Gamma \backslash T \\
    \alpha_{v,i}, \alpha_{v,i}^{-1}, \beta_{v,i}, \beta_{v,i}^{-1} & \ \text{ for any vertex \ } v \in \{1, \dots , s\} \text{ and } i=1,2,\ldots, g_v. 
\end{align*}
(Note that there are indeed 4 generators associated to $a \in \Gamma \backslash T$: $c_a,c_a^*, z_a,z_a^{-1}$.)

The $\k \langle \t^{\pm} \rangle$-bimodule structure on $\Lambda_\Gamma$ is obtained by extending the scalars from the
$\k$-bimodule structure defined by 
\begin{align*}
e_w c_a e_v &= c_a,\ \ e_v c_a^* e_w =c_a^* , \\
e_v & z_a^{\pm} e_v = z_a^{\pm} 
\end{align*}
if and only if $v=s(a)$ and $w=t(a)$ are the source and target vertices of the arrow $a$,
respectively, and $\k$-bimodule structure on the remaining generators is the obvious one:
\begin{align*}
    e_v & \alpha_{v,i}^{\pm} e_v = \alpha_{v,i}^{\pm} \\
    e_v & \beta_{v,i}^{\pm} e_v = \beta_{v,i}^{\pm}
\end{align*}
In all other cases, the product with an idempotent gives zero. With this understood, the underlying algebra structure of $\Lambda_\Gamma$ is the tensor algebra of the $\k$-bimodule generated by $c_a,c_a^*, z_a^{\pm}, \alpha_{v,i}^\pm, \beta_{v,i}^\pm, t_v^{\pm}$. 

Note that the variables $t_v$ are taken to be non-central (unless we specialise). 
The generators of $\Lambda_\Gamma$ are subject to the following relations (all multiplications are read from right-to-left): 
\begin{align*}  
    z_a z_a^{-1} = z_a^{-1} z_a = e_v , \ \ &\text{\ \ for  } a \in \Gamma \backslash T \text{\ and\ }  v=s(a) \\
    e_v + c_a^*c_a = z_a , \ \ &\text{\ \ for } a \in \Gamma \backslash T \text{\ and\ }  v=s(a)  \\
    \prod_{t(a)=v}(e_v + c_ac_a^*)=  t_v \prod_{i=1}^{g_v} [\alpha_{v,i}, \beta_{v,i}] \prod_{s(a)=v} (e_v + c_a^*c_a),  \ \ &\text{\ \ for } v \in \{1,\ldots,s\} 
\end{align*} 
where the terms in the products are ordered according to a fixed order of all the arrows in $\Gamma$,
and the empty product is understood to be the relevant idempotent. The notation $[x,y]$
stands for the commutator $xyx^{-1}y^{-1}$. 

It is known that $\Lambda_{\Gamma,\mathbf{g}}$ is independent of the orientation and order of the edges of
$\Gamma$, and the choice of the spanning tree $T$ (cf. \cite[Thm. 1.4]{C-BS}). This also
follows from our main result by the invariance properties of Chekanov-Eliashberg
DG-algebra $\mathscr{B}_{\Gamma, \mathbf{g}}$. 

A DG-algebra $\mathscr{L}_{\Gamma,\mathbf{g}}$ such that $H^0(\mathscr{L}_{\Gamma,\mathbf{g}}) \simeq
\Lambda_{\Gamma,\mathbf{g}}$ is
discussed in \cite[Rem.5C \& Ex.5.6]{BK}  (where $t_\nu$ are called $q_\nu$ which are taken as parameters in $\mathbb{C}^\times$, see also \cite{yeung}) under the name (universal) derived
multiplicative preprojective algebra. In the current set-up, $\mathscr{L}_{\Gamma,\mathbf{g}}$ can
be defined by adding generators $\zeta_a$ for $a \in \Gamma \setminus T$ and $\tau_v$ for $v\in \{1 , \dots , s\}$ with grading $-1$, in lieu of the last two relations above, to the
set of generators of $\Lambda_{\Gamma,\mathbf{g}}$ (which are assigned grading $0$) so that
$H^0(\mathscr{L}_{\Gamma,\mathbf{g}}) = \Lambda_{\Gamma,\mathbf{g}}$. 
Namely, the differentials of these additional generators are
    \begin{align*}\d \zeta_a &= e_v+c_a^*c_a - z_a, \ \ \text{\ \ for } a \in \Gamma \backslash T \text{\ and\ }
    v=s(a) \\
        \d \tau_v &= \prod_{t(a)=v}(e_v + c_ac_a^*) - t_v \prod_{i=1}^{g_v} [\alpha_{v,i},\beta_{v,i}] \prod_{s(a)=v} (e_v + c_a^*c_a), \ \  \text{\ \ for } v
    \in \{1,\ldots,s\} \end{align*}
 
We can now state our main theorem succinctly as follows:

\begin{thm}\label{mainthm} For any graph $\Gamma$ and $\mathbf{g}$, there is a quasi-isomorphism
    between the derived multiplicative preprojective algebra $\mathscr{L}_{\Gamma,\mathbf{g}}$ and the
    Chekanov-Eliashberg DG-algebra $\mathscr{B}_{\Gamma, \mathbf{g}}$.
  \end{thm}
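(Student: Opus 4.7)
The plan is to prove the theorem by explicitly computing $\mathscr{B}_{\Gamma,\mathbf{g}}$ from a carefully chosen Legendrian surgery presentation of $X_\Gamma$, and then exhibiting a DG-algebra map from $\mathscr{L}_{\Gamma,\mathbf{g}}$ to $\mathscr{B}_{\Gamma,\mathbf{g}}$ that is a quasi-isomorphism by inspection. First I would fix a spanning tree $T \subset \Gamma$ and construct the Legendrian link $L_\Gamma \subset \#^{k+2g}(S^1\times S^2,\xi_{std})$ with one component $\Lambda_v$ per vertex as follows: for each $v$, realise a Legendrian surface of genus $g_v$ by taking a Legendrian unknot and attaching $2g_v$ of the $1$-handles of the ambient manifold so that the component runs through them in the standard $[\alpha,\beta]$-pattern; for edges in the tree $T$, plumb adjacent components as in \cite{EL}; and for each edge $a\in\Gamma\setminus T$, introduce an extra $1$-handle through which only the endpoint component $\Lambda_{s(a)}=\Lambda_{t(a)}$ passes. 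Standard arguments about Legendrian surgery and handlebody presentations of plumbings ensure that the resulting Weinstein $4$-manifold is indeed $X_\Gamma$.

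Next I would compute the Chekanov--Eliashberg DG-algebra of this link via the Ekholm--Ng formalism, writing $\mathscr{B}_{\Gamma,\mathbf{g}}$ as the free product of the external DG-algebra coming from the $0$-handle of $\#^{k+2g}(S^1\times S^2)$ with one copy of the internal DG-algebra $\mathscr{I}$ of \cite{EkNg} for each of the $k+2g$ one-handles. As announced in the introduction, the key technical step here is to replace each $\mathscr{I}$ by a finitely generated DG-subalgebra that is quasi-isomorphic to it; I would accomplish this by exhibiting explicit contracting homotopies that kill all but the low-degree chords $z_a^{\pm 1}$, $\alpha_{v,i}^{\pm 1}$, $\beta_{v,i}^{\pm 1}$ associated with each handle, together with the $(-1)$-dimensional generators needed to record that $z_a z_a^{-1}=z_a^{-1}z_a = e_{s(a)}$ and similarly for $\alpha,\beta$. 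The mixed chords $c_a, c_a^*$ between different components arise as in the tree case from the plumbing regions along edges of $\Gamma$, and receive no contribution from the handles.

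With this explicit model in hand I would identify all generators of $\mathscr{L}_{\Gamma,\mathbf{g}}$ with Reeb chords in $\mathscr{B}_{\Gamma,\mathbf{g}}$ and compute the boundary operator disc-by-disc. The internal calculation already performed in \cite{EL} for the tree case handles the $c_a,c_a^*$ pieces and produces, on each component $\Lambda_v$, one $(-1)$-chord $\tau_v$ whose differential reads off the product of $(e_v+c_a c_a^*)$ and $(e_v+c_a^*c_a)$ factors coming from the two sides of each incident edge. The new contributions to $\partial\tau_v$ are the factors $t_v$ (from the longitudinal chord of $\Lambda_v$, which for $g_v>0$ records the framing/capping class) and $\prod_{i=1}^{g_v}[\alpha_{v,i},\beta_{v,i}]$ (from the strips running through the genus handles of $\Lambda_v$, counted using the Stokes-type argument of Ekholm--Ng). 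Each cycle edge $a\in\Gamma\setminus T$ contributes a $(-1)$-chord $\zeta_a$ with $\partial\zeta_a = e_v+c_a^* c_a - z_a$, again by a direct disc count through the handle. Defining $\Phi\colon \mathscr{L}_{\Gamma,\mathbf{g}}\to\mathscr{B}_{\Gamma,\mathbf{g}}$ as the obvious $\k\langle\t^{\pm}\rangle$-algebra map on generators, these computations verify $\Phi\circ\partial=\partial\circ\Phi$.

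Finally, to conclude that $\Phi$ is a quasi-isomorphism I would filter both sides by word length in the $c_a,c_a^*$ and compare the resulting spectral sequences. On the $E_1$-page the $\zeta_a$ and $\tau_v$ relations become the defining relations of $\Lambda_{\Gamma,\mathbf{g}}$, so the $H^0$-comparison follows from Crawley-Boevey--Shaw \cite{C-BS}; in higher (negative) degrees the cohomology of $\mathscr{B}_{\Gamma,\mathbf{g}}$ is forced to match that of $\mathscr{L}_{\Gamma,\mathbf{g}}$ by the $3$-dimensional Calabi--Yau structure on the wrapped Fukaya category combined with the fact that, as explained in \cite{BK}, $\mathscr{L}_{\Gamma,\mathbf{g}}$ is the universal CY-3 completion of the free algebra on the generators. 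I expect the main technical obstacle to be the reduction of Ekholm--Ng's infinitely generated internal DG-algebra to a manageable finite model with a clean differential, since all subsequent disc counts and identifications depend on having this explicit model in place and on controlling the signs introduced when strands run through $1$-handles multiple times.
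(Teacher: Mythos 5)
Your construction and computation steps track the paper closely: the same surgery presentation (spanning tree plumbed as in \cite{EL}, one extra $1$-handle per edge of $\Gamma\setminus T$ with both strands on a single component, and $2g_v$ handles per vertex realising the genus), the same use of the Ekholm--Ng formalism, and the same key technical move of replacing the infinitely generated internal algebra by a finite quasi-isomorphic model (in the paper this is Theorem~\ref{retract}, and in the application only $\mathscr{I}_2^{\mathbb K}\simeq\mathbb{K}[z,z^{-1}]$ is needed). Two caveats on this part: for a non-loop edge $a\in\Gamma\setminus T$ one has $s(a)\neq t(a)$, so your phrase ``$\Lambda_{s(a)}=\Lambda_{t(a)}$'' is not literally what happens (both strands through $h_a$ lie on the single component obtained by connect-summing $U_{s(a)}$ with a meridian of $U_{t(a)}$); and ``compute the boundary disc-by-disc'' hides real work, since the diagram has secondary crossings (and, for $g_v>0$, the $\xi_{v,j},x_{v,j}$ generators) whose removal by explicit tame automorphisms is exactly what produces the commutator factors $\prod_i[\alpha_{v,i},\beta_{v,i}]$ in $\partial\tau_v$ (Lemma~\ref{secondaries} and the lemma of Section~\ref{g>0}).

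The genuine gap is your final step. You propose to prove that $\Phi$ is a quasi-isomorphism by filtering both sides by word length in $c_a,c_a^*$ and comparing spectral sequences, supplemented by a Calabi--Yau rigidity argument in negative degrees. This is precisely the kind of argument the paper warns against: the word-length filtration on these free (non-complete) algebras does not yield a strongly convergent spectral sequence, so an isomorphism on $E_1$ (or even $E_\infty$) pages only gives a comparison after completing both algebras --- this is exactly why the analogous attempt to identify $\mathscr{B}_\Gamma$ with the Ginzburg algebra $\mathscr{G}_\Gamma$ fails, and why Conjecture~3 of \cite{EL} is false. Moreover, the rigidity input you invoke is misstated: the relevant structure here is $2$-Calabi--Yau (the plumbing is a $4$-manifold), not CY-$3$, and $\mathscr{L}_{\Gamma,\mathbf{g}}$ is not characterised as a ``universal CY-$3$ completion''; in any case no such deformation-theoretic argument controls the uncompleted algebra. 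The paper avoids all of this: after destabilising the secondary (and $\xi,x$) generators, the remaining DG-algebra $\mathscr{P}_{\Gamma,\mathbf{g}}$ is shown to be \emph{isomorphic} to $\mathscr{L}_{\Gamma,\mathbf{g}}$ on the nose, via the explicit change of variables $\tau_v\mapsto\tau_v+t_v\prod_i[\alpha_{v,i},\beta_{v,i}]\bigl(\sum_i z_{a_1}\cdots z_{a_{i-1}}\zeta_{a_i}(e_v+c_{a_{i+1}}c^*_{a_{i+1}})\cdots\bigr)\prod_{a\in T}(e_v+c_a^*c_a)$, which absorbs the discrepancy between $z_a$ and $e_v+c_a^*c_a$ using the $\zeta_a$'s (Theorem~\ref{derived} and the proof of Theorem~\ref{mainthm}). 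To repair your argument you should replace the spectral-sequence comparison by such an explicit isomorphism (or at least an explicit filtered map with bounded filtration), since without it the higher-degree comparison is not justified.
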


\begin{rmk} The main theorem is proved by identifying $\mathscr{B}_{\Gamma,\mathbf{g}}$ and
    $\mathscr{L}_{\Gamma,\mathbf{g}}$ as DG-algebras
    by direct computation of the Chekanov-Eliashberg DG-algebra of the Legendrian
    $L_{\Gamma,\mathbf{g}}$. In
    view of the surgery results from \cite{BEE}, \cite{EkLe}, our result provides
    computations of various wrapped Fukaya categories via an $A_\infty$ quasi-isomorphism
    \[ \mathscr{B}_{\Gamma,\mathbf{g}}  \cong \bigoplus_{v,w} CW^*(C_v,C_w) \]  
where $CW^*(C_v,C_w)$ stands for the morphisms between the cocores
    $C_v,C_w$ in the partially wrapped Fukaya of $(\#^{k+2g} (S^1 \times \mathbb{D}^3), \omega_{std})$
    where all Legendrians $L_v$ are used as stops (see \cite[Conj. 3]{EkLe}). In this setting, it
    is important that the indeterminates $t_v$'s do not commute with the other generators. On the other
    hand, if one specialises
    to $t_v = e_v$ for $v=1,\ldots,s$, one recovers the fully wrapped Fukaya category of $X_\Gamma$ where $X_\Gamma$ is equipped with
    the exact symplectic form associated with its Weinstein structure. Setting the value of
    $\mathrm{Log}(t_v) = \omega([S_v])$
    to other constants, has the effect of turning on non-exact (bulk) deformations of the symplectic form,
    by giving area to the zero-sections $S_v$ of the cotangent bundles involved in the plumbing $X_\Gamma$. One can interpret this as a computation
    of bulk deformed wrapped Fukaya categories. 
\end{rmk}

\begin{rmk} As in \cite{EkLe}, let 
    \[ \mathscr{A}_{\Gamma,\mathbf{g}} = \bigoplus_{v,w} CF^*(S_v,S_w) \] be the endomorphism algebra of the
    compact exact Lagrangians in $X_\Gamma$ which form the Lagrangian
    skeleton of the plumbing $X_\Gamma$. After letting $t_v=e_v$, for $v=1,\ldots,s$, one has a
    canonical augmentation 
    \[ \epsilon : \mathscr{B}_{\Gamma,\mathbf{g}} \to \k \]
corresponding to the module $\bigoplus CF^*(C_v,S_v)$ in the (fully) wrapped Fukaya category of
    $\mathcal{W}(X_\Gamma)$, given explicitly by
    \[ \epsilon(c_a) = \epsilon(c_a^*) = 0\  ,\  \epsilon(\alpha^{\pm}_{v,i}) =
    \epsilon(\beta^{\pm}_{v,i})=  e_v = \epsilon(z^{\pm}_a),\  v =s(a) \]
    By \cite[Thm. 4]{EkLe}, one can compute 
    \[ \mathscr{A}_{\Gamma, \mathbf{g}} \simeq
    \mathrm{Rhom}_{\mathscr{B}_{\Gamma,\mathbf{g}}}(\k,\k) \]
This could be useful in studying Fukaya categories of symplectic 4-manifolds.
    Namely, if one has any configuration of transversely intersecting compact Lagrangians in a symplectic 4-manifold whose
    intersection pattern is encoded by the decorated graph ($\Gamma$,$\mathbf{g}$), the corresponding $A_\infty$-algebra is a
    deformation of $\mathscr{A}_{\Gamma,\mathbf{g}}$.

    We also note that the grading in $\mathscr{B}_{\Gamma,\mathbf{g}}$ is supported in non-positive degrees. In
    higher dimensional plumbings, Abouzaid and Smith used this property and an additional requirement
    on the zeroth cohomology of cocores \cite[Prop. 4.2]{AS} that guarantees that the only
    modules supported in a single degree are given by the compact cores with local systems, to deduce
    that the compact cores of the plumbing generate the compact Fukaya category. In our case, this
    additional requirement does not hold. Nonetheless, the core Lagrangian surfaces $\{ S_v \}$ are known to generate the compact Fukaya category
    $\mathcal{F}(X_\Gamma)$ in several situations, including when $\Gamma$ is of Dynkin type with
    $\mathbf{g}=0$ (\cite[Lem. 4.15]{Seidelgraded} and
    \cite[Cor. 5.8]{Se}). We also remark that Keating \cite{keating} gave examples of Weinstein
    4-manifolds which are \emph{partial compactifications} of plumbings with $\mathbf{g}=0$, where an exact Lagrangian
    torus (equipped with certain local systems) is not generated by the core spheres. 
\end{rmk}

\begin{rmk} After we wrote our first paper \cite{EL} on computing wrapped Fukaya categories of
    $X_\Gamma$, which studied the case where $\Gamma$ is a tree and $\mathbf{g}=0$, an interesting paper by
    Bezrukavnikov and Kapranov \cite{BK} appeared, where finite dimensional
    $\Lambda_{\Gamma,\mathbf{g}}$-modules are related to microlocal sheaves on the Lagrangian skeleton of $X_\Gamma$ given by the
    union of transversely intersecting surfaces. In view of the  correspondence
    between Fukaya
    categories of cotangent bundles and microlocal sheaves on the zero-section \cite{NZ}, Thm.~(\ref{mainthm}) provides
    a confirmation of the more general expectation of an equivalence between Fukaya categories of Weinstein
    manifolds and microlocal sheaves on (singular) Lagrangian skeleta for the manifolds $X_\Gamma$. This more general equivalence is
    the subject of ongoing work by Ganatra, Pardon and Shende. 
    \end{rmk}

Let us consider the case $\Gamma$ is a tree and $\mathbf{g}=0$, and specialise to $t_v=e_v$. 
By definition of $\mathscr{B}_\Gamma$, the differential $\partial \tau_v$ consists of words of (even) length
    $\geq 2$. If we truncate it so that only words of length 2 appears, the resulting
    DG-algebra is known as the Ginzburg DG-algebra $\mathscr{G}_\Gamma$ which was studied in
    \cite{EL}. It is known that if $\Gamma$ is not of Dynkin type, $\mathscr{G}_\Gamma$
    has cohomology only in degree 0 (cf. Thm 8, Cor. 27 \cite{EL}) and $H^0
    (\mathscr{G}_\Gamma)= \Pi_\Gamma$ is the additive preprojective algebra. Now, by considering the
    word-length filtration, we get a spectral sequence from $\mathscr{G}_\Gamma$ to
    $\mathscr{B}_\Gamma$. At least when $\mathrm{char} \mathbb{K}=0$, one can show that the length spectral
sequence used in the above proof is degenerate. The reason for this was given in \cite[pg.
7]{EL}. Namely, it is known that $\Pi_\Gamma$ has no formal deformations as a Calabi-Yau algebra.
However, the length spectral sequence does not converge strongly if $\Gamma$ is non-Dynkin, hence one cannot conclude that
$\mathscr{B}_\Gamma$ and $\mathscr{G}_\Gamma$ are quasi-isomorphic without having to complete both
algebras. Indeed, for non-Dynkin $\Gamma$, the additive preprojective algebra
$\Pi_\Gamma = H^0(\mathscr{G}_\Gamma)$ and the multiplicative preprojective algebra $\Lambda_\Gamma =
H^0(\mathscr{B}_\Gamma)$ are not isomorphic. This can be seen by considering the representation
varieties of $\Pi_\Gamma$ and $\Lambda_\Gamma$ - additive and multiplicative Nakajima quiver
varieties. Thus, it is the (derived) multiplicative preprojective algebra
$\mathscr{B}_\Gamma$
that is central to the study of wrapped Fukaya categories of plumbings, not the additive preprojective algebra (or its derived version).

\begin{example} When $\Gamma$ is just a single vertex labelled with $g$, then $X_\Gamma$ is
    just the cotangent bundle of a genus $g$ surface $\Sigma_g$. In this case, we see that the
    multiplicative preprojective algebra is simply the group-ring
    $\mathbb{K}[\pi_1(\Sigma_g)]$, which agrees with the based loop space homology of the cotangent
    fibre. This agrees with the computation given in \cite{EkNg}.  
\end{example} 

\begin{example} When $\Gamma$ has a single vertex (labelled with $0$) and a single loop, the symplectic manifold
    $X_\Gamma$ obtained by self-plumbing of $T^*S^2$ was studied in detail in
    \cite{pascaleff}. In this case, it is easy to see that (cf. \cite[Ex. 1.3]{C-BS}) the
    multiplicative preprojective algebra (with $t=1$) $\Lambda_\Gamma$ is quasi-isomorphic to the
    commutative algebra $\mathbb{K}[x,y,(1+xy)^{-1}]$, which agrees with the homological
    mirror symmetry statement proven in \cite{pascaleff} (cf. \cite{CM}). This variety has a holomorphic symplectic form given by $\Omega=dx \wedge dy/(1+xy)$ which agrees with the standard holomorphic symplectic form on the two copies of $(\mathbb{C}^\times)^2$ given by the co-ordinates $(x,1+xy)$ (for $x\neq 0$) and $(y,1+xy)$ (for $y \neq 0$). One can consider the quantization of the functions compatible with the Poisson structure induced by $\Omega$. We point out that if $t$ is not specialized to 1 but carried along as a commuting complex parameter corresponding to an area parameter on the immersed sphere, then we get the ``quantized algebra'' where $x$ and $y$ no longer commute but instead satisfy    
    \[ 1+ xy  = t(1+yx) \] 
where $(1+xy)$ is still invertible. Indeed, it is easy to see from this that we get the standard quantum torus algebras on each copy of $(\mathbb{C}^*)^2$. For example, we have
\[ t(1+xy)x = tx(1+yx) = x(1+xy) \]
\end{example}

We end the introduction by pointing out an intriguing connection to (multiplicative) Nakajima quiver
varieties that we plan to return to in the future. Note that such a connection was also observed in
\cite{BK} (see also \cite{ST}) via calculations using microlocal sheaves.

It is well known that the moduli spaces of representations of the
multiplicative preprojective algebra $\Lambda_\Gamma$ has the structure of algebraic symplectic
varieties known as the multiplicative Nakajima quiver varieties (studied by Yamakawa
\cite{yamakawa}, see also Boalch \cite{boalch}). By our
result, any Lagrangian $L \subset X_\Gamma$ gives a representation $\bigoplus_v \mathrm{Hom}(C_v, L)$
of the multiplicative preprojective algebra $\bigoplus_{v,w} \mathrm{Hom}(C_v,C_w)$. Thus, our result can be interpreted as 
the statement that ``moduli space of objects'' in the derived Fukaya category of $X_\Gamma$ can be identified with the corresponding multiplicative Nakajima quiver varieties. These are finite type varieties once one fixes a dimension vector consisting of 
$d_v \in \mathbb{Z}_{\geq 0}$ for each vertex $v$ of $\Gamma$ and a stability condition encoded by
$\theta_v \in \mathbb{Q}$ for each $v$, satisfying $ \sum_v d_v \theta_v= 0$, and the values of $t_v \in \mathbb{C}^\times$. The choice of dimension vectors
corresponds to fixing the rank $d_v = \mathrm{dim}_\mathbb{K} \mathrm{Hom}(L,C_v)$, and the choice of stability
condition should correspond to fixing a choice of Bridgeland stability on the  derived Fukaya category
of $X_\Gamma$.
Finally, note that the symplectic structures on the Nakajima quiver varieties correspond to the fact
that $\mathscr{B}_{\Gamma,\mathbf{g}}$ is 2-Calabi-Yau (cf. \cite{PTVV}). \\

In Section \ref{Ch-El}, after reviewing the construction of Chekanov-Eliashberg algebra associated to a
Legendrian surgery presentation of a Weinstein 4-manifold following Ekholm-Ng \cite{EkNg}, we
provide a simplification that results in a finitely generated DG-algebra even in the presence of
1-handles. In Section \ref{g=0}, we draw a Legendrian surgery presentation of plumbings of $T^*S^2$'s
according to an arbitrary graph and prove our main theorem for $\mathbf{g}=0$. In Section \ref{g>0},
we extend the results of Section \ref{g=0} to arbitrary $\mathbf{g}$. \\

{\bf Acknowledgments:} T.E. is partially supported by the Technological and Research Council of
Turkey through a BIDEB-2219 fellowship. Most of the work was carried out while T.E. was visiting Princeton University.
Y.L. is supported in part by the Royal Society (URF) and
the NSF grant DMS-1509141. We would like to thank Pavel Etingof, Lenhard Ng, and Ivan Smith for helpful correspondence. We also thank referees for their comments. 

\section{Legendrian DG-algebra on a subcritical Weinstein 4-manifold}\label{Ch-El}

Recall that any Weinstein 4-manifold $X$ has a Weinstein handlebody decomposition consisting of 0-,
1- and 2-handles. Assuming $X$ is connected, one can find a handlebody decomposition
with a unique 0-handle. Furthermore, Weinstein $1$-handles can be attached in a unique way, hence
the more interesting part of the handlebody decomposition is the attachment of Weinstein $2$-handles along Legendrian knots in $\#^k (S^1 \times S^2)$ equipped with the
unique Stein-fillable contact structure, where $k$ is the number of $1$-handles. If $k=0$, the
$2$-handles are attached along Legendrian knots in $S^3$ equipped with its standard contact contact
structure. 

Following Gompf \cite{G}, a surgery diagram for $X$ is usually presented as a front diagram in a
\emph{standard form}. In order to give a combinatorial description of the Chekanov-Eliashberg's
Legendrian DG-algebra (\cite{C, E}) associated with a Legendrian link $L \subset \#^k (S^1
\times S^2)$
given in Gompf's standard form, Ekholm and Ng \cite{EkNg} described a procedure called
\emph{resolution} (analogous to the resolution of a Legendrian in $S^3$ in \cite{Ng}). We give an example in Figure \ref{fig0} and refer the readers to the original references \cite{G, EkNg} for precise definitions. 

\begin{figure}[htb!]
\centering
    \begin{tikzpicture}
   \tikzset{->-/.style={decoration={ markings,
                mark=at position #1 with {\arrow{>}}},postaction={decorate}}}

        \begin{scope}[yscale=1.2] 
         \draw[blue,thick] (-1.7,4.4) [in=180,out=0]to (1,4);
        \draw[blue,thick] (-1.7,3.6) [in=180,out=0]to (1,4);
        \draw[blue,thick] (1.7,4.4) [in=0,out=180]to (-1,4);
        \draw[blue,thick] (1.7,3.6) [in=0,out=180]to (-1,4);
        \draw[blue,thick] (-1.7,4.4) to (-2.7,4.4);
         \draw[blue,thick] (-1.7,3.6) to (-2.7,3.6);
          \draw[blue,thick] (1.7,3.6) to (2.7,3.6);
           \draw[blue,thick] (1.7,4.4) to (2.7,4.4);
         
        \draw [thick] (-3,4) circle (0.5); 
 \draw [thick] (3,4) circle (0.5); 
  \draw [thick] (-3,2) circle (0.5); 
 \draw [thick] (3,2) circle (0.5); 

         \draw[blue,thick] (-2.7,2.4) [in=270,out=0]to (0,2);
        \draw[blue,thick] (-2.7,1.6) [in=90,out=0]to (0,2);
        \draw[blue,thick] (2.7,1.6) [in=90,out=180]to (-1.7,2);
        \draw[blue,thick] (2.7,2.4) [in=270,out=180]to (-1.7,2);

           \draw [thick] (-3,0) circle (0.5); 
 \draw [thick] (3,0) circle (0.5); 
    \draw[blue,thick] (-2.7,0.4) [in=260,out=0]to (0.4,0);
        \draw[blue,thick] (-2.7,-0.4) [in=100,out=0]to (0.4,0);

            \draw[blue,thick] (2.7,-0.4) [in=80,out=180]to (-0.3,0);
        \draw[blue,thick] (2.7,0.4) [in=280,out=180]to (-0.3,0);

        \end{scope}

    \end{tikzpicture}
    \caption{A Legendrian knot in Gompf normal form (top), Ekholm-Ng's resolution (middle), A
    simplifying Legendrian isotopy via Reidemeister III (bottom)}
    \label{fig0}
\end{figure}

We next recall the description of the Legendrian DG-algebra, here denoted $CE^*(L)$, that
Ekholm and Ng provide coming from a resolution diagram. All our complexes are cohomological,
thus we reverse the gradings from \cite{EkNg}. 

Fix a field $\mathbb{K}$. Let $L= L_1 \cup L_2 \cup \ldots L_s \subset \#^k (S^1 \times S^2)$ be a link given in the resolved
form. 
Let $a_1,\ldots, a_n$ denote the crossings of the resolution diagram. One refers to the set
$\{a_1,\ldots,a_n\}$ as the set of \emph{external generators}. Note each $a_i$ corresponds to
a Reeb chord connecting $L_{v}$ to $L_{w}$ for some $v$ and $w$, where $L_v$ is the label of the
under-strand and $L_w$ is the label of the over-strand at the crossing $a_i$. Thus, we can form the
$\k$-bimodule
\[ \mathcal{E} \coloneqq \bigoplus_{i=1}^n \mathbb{K} a_i\]
where $e_w a_i e_v = a_i$ precisely if $a_i$ corresponds to a Reeb chord from $L_v$ to $L_w$. 

In the interior region of each $1$-handle, we also have generators. These will be referred to as the
\emph{internal generators}. Following Ekholm-Ng, we describe these generators. Let $k$
be the number of 1-handles. For each $1 \leq l \leq k$, let $o_l$ denote the number of strands of
$L$ entering to $l$-th handle. We label the strands on the left of the diagram from top to bottom by
$1, \ldots, o_l$ and on the right of the diagram from bottom to top by $1,\ldots, o_l$ (note the
switch in the ordering). For each $1$-handle, there are infinitely many internal generators. For $1
\leq l \leq k$, these are labelled as follows:
\begin{align*} 
    &c_{ij;l}^0 \text{ for } 1 \leq i <j \leq o_l, \\
    &c_{ij;l}^p \text{ for } p>0 \text{ and } 1 \leq i, j \leq o_l. 
\end{align*}
The generator $c_{ij;l}^p$ for $p\geq 0$ connect the $i^{th}$ strand to $j^{th}$ strand going to
the handle labelled $l$, thus corresponds to a Reeb chord from some $L_v$ to another $L_w$. This defines a $\k$-bimodule structure by defining $e_w c_{ij;l}^p e_v =  c_{ij;l}^p$. 
We write
\[ \mathcal{I} = \mathcal{I}_1 \oplus \mathcal{I}_2 \oplus \ldots \oplus \mathcal{I}_k \]
for the corresponding $\k$-bimodule generated by the internal generators. 

\begin{definition} We let $CE^*(L)$ to be the DG-algebra over $\k$ generated by the external generators
$$a_1,a_2,\ldots, a_n , $$
the internal generators
    \begin{align*}
        &c_{ij;l}^0,\ 1 \leq i < j \leq o_l,\ l=1,\ldots,k\\
        &c_{ij;l}^p,\ p>0,\ 1\leq i, j \leq o_l,\ l=1,\ldots,k, 
    \end{align*} 
and
$$t_1, t_1^{-1}, \ldots, t_s, t_s^{-1} $$
with relations
    \begin{align*} t_v t_v^{-1} &= t_v^{-1} t_v =e_v, \text{ for all } v=1,\ldots,s, \\
        t^\pm_v t^{\pm}_w &=0 , \text{ for } v \neq w
    \end{align*}
and the $\k$-bimodule structure described above.

The differential on the external generators is given by
    \[ \partial(a_i) = \sum_{r \geq 0} \sum_{b_1,\ldots,b_r} \sum_{\Delta \in
    \Delta(a_i;b_1,\ldots,b_r)} \pm \{ t_1^{-n_1(\Delta)}, \ldots ,
    t_{s}^{-n_{s} (\Delta)} , b_r, \ldots, b_1 \} \]
    where $b_q$ are either external generators or of the form $c^{0}_{ij;l}$ and $n_v$ record the
    number of times the boundary of the holomorphic disk $\Delta$ passed through $*_v$, and the
    notation 
    \[ \{ t_1^{n_1(\Delta)}, \ldots ,
    t_{s}^{n_{s} (\Delta)}, b_r, \ldots ,b_1 \} \]
stands for the time-ordered product where reading from right-to-left corresponds to the order of terms appearing in the boundary of $\Delta$ when it is traversed
    counter-clockwise. In particular, note that $t_v$ and $b_q$ maybe interlaced, as we do not
    necessarily insist that these commute. See \cite[Sec. 2.4C]{EkNg} for the determination of
    the sign. \footnote{As in \cite{EL}, \cite{EkLe}, we
    follow the convention that multiplications are read from \emph{right-to-left}. This is not the
    same convention used in \cite{EkNg}. Throughout, our formulae are adjusted accordingly by
    taking the opposite DG-category (see \cite[pg. 8]{EL}). } 
    
        The differential on the internal generators (see Def.~(\ref{defint})) satisfies
    $\d(\mathcal{I}_l) \subset \mathcal{I}_l$ for all $l=1,\ldots, k$ and only depends on the number
    of strands passing through the 1-handle. Thus, the internal DG-algebra corresponding to
    a $1$-handle with $n$ strands is denoted by $\mathscr{I}_n$. 
    
    The differential of a generators of type $t_v$ is trivial.

    It is possible to put a $\mathbb{Z}$-grading on $CE^*$ (by making some extra choices). We refer to \cite[Sec. 2.4B]{EkNg} for a
    detailed discussion of gradings on $CE^*$. Though, we use the negative of the grading given in
    \cite{EkNg} so that the differential increases this grading by 1. Note that the grading
    $|t_v| = 2 r(L_v)$ where $r(L_v)$ is the rotation number.
\end{definition} 

We shall next discuss the internal algebra $\mathscr{I}_n$ of Ekholm-Ng in more detail and show that it can be simplified to only include the free generators
of the form $c_{ij;l}^0$ and $c_{ij;l}^1$. Hence, in particular, it is finitely generated up to quasi-isomorphism.

\subsection{A simplification of Ekholm-Ng's internal DG-algebra}\label{simplify}

In Ekholm-Ng \cite{EkNg}, the internal algebra $\mathscr{I}_n$ was constructed geometrically from the study of
Reeb chords internal to a 1-handle. It has the undesirable feature of being infinitely generated. Here, we provide smaller models for the quasi-isomorphism type of $\mathscr{I}_n$. In particular, we compute the cohomology of
$\mathscr{I}_n$ for all $n$, and show that $\mathscr{I}_n$ is quasi-isomorphic to a finitely generated subalgebra of itself.  

Note that throughout this paper the base ring is taken to be the semisimple ring $\k$ where the number
of idempotents is given by the number of $2$-handles in the surgery diagram corresponding to the
components of the Legendrian link.

In order to achieve full generality, we shall first define $\mathscr{I}_n$ as a DG-algebra over a different ring
\[ \k_n = \bigoplus_{i=1}^n \mathbb{K} e_i \]
As usual, $n$ denotes the number of strands entering the $1$-handle, irrespective of the $2$-handles that these strands belong to.
On the other hand, the internal DG-subalgebra of $CE^*(L)$ for a particular $L$ is obtained from $\mathscr{I}_n$ by a base change
$\k \to \k_n$ given by the configuration of $2$-handles entering in the relevant $1$-handle. More precisely, at each 1-handle, the ring map from $\mathbf{k} \to \mathbf{k}_n$ is given by sending the idempotent $e_v$ associated to a $2$-handle $L_v$ to the sum of idempotents $e_{i_1} + \ldots + e_{i_v}$ in $\mathbf{k}$ where $i_1,\ldots, i_v$ are the strands that pass through the $1$-handle which belong to $2$-handle $L_v$. For example, if a component of $L_v$ enters to the same 1-handle more than once, then a chord that goes between the two strands of $L_v$ can be composed with itself (arbitrarily many times). 

We note that in \cite{EkNg}, $\k$ is always taken to be the ground field $\mathbb{K}$, regardless of the number of components of $L$ entering in the 1-handle. This is the opposite extreme to taking $\k = \k_n$ in that all the chords between any strands are allowed to be composable. However, from the perspective of Legendrian surgery, the correct choice if $L$ is not connected is to work over a ring $\k$ with idempotents in bijection with the connected components of $L$.

We give a formal definition as follows:

\begin{definition} \label{defint} We define $\mathscr{I}_n$ to be a DG-category over $\mathbb{K}$ with $n$-objects : $X_1 , \dots ,
    X_n$, and morphisms $\bigoplus_{i,j} \mathrm{hom} (  X_i , X_j)$ are freely generated (over $\k_n$)
 by
    morphisms in $\mathrm{hom}(X_i, X_j)$ given by:
    \[ c^0_{ij} : 1 \leq i < j \leq n \ \mbox{ and } \  c^p_{ij} : 1 \leq p ,   1\leq i,j \leq n \] 
    The (cohomological) grading is defined by first fixing $n$ integers $(m_1,m_2,\ldots,m_n)$ and then defining:
$$ |c^p_{ij} | = 1-2p+m_j- m_i. $$
    The differential $\d$ is determined by 
    \begin{align*} 
    \partial c^0_{ij} &= \sum_{i<r<j} (-1)^{m_r+m_j} c_{rj}^0 c_{ir}^0, \ \ i < j \\
    \partial c^1_{ij} &= \delta_{ij}+\sum_{r>i} (-1)^{m_r+m_j} c^1_{rj} c^0_{ir} + \sum_{r<j}
    (-1)^{m_r+m_j} c^0_{rj} c^1_{ir}  \\
\partial c^p_{ij} &= \sum_{0 \leq q \leq p}
\sum_r (-1)^{m_r+m_j} c^{p-q}_{rj} c^{q}_{ir} , \ \ p>1 \end{align*}
   ($c^0_{ij}$ is set to be $0$ for $i\geq j$, $\delta_{ij} = e_i = e_j$ if $i=j$, and 0 otherwise) and the graded Leibniz rule:
    \[ \partial (a_2 a_1) = \partial(a_2) a_1 + (-1)^{|a_2|} a_2 \partial(a_1). \]

    We define $\mathscr{A}_n$ to be the DG-subcategory of $\mathscr{I}_n$ with the same objects $X_1,\ldots, X_n$,
    but morphisms in $\bigoplus_{i,j} \mathrm{hom} (  X_i , X_j)$ are generated only by 
 \[ c^0_{ij} : 1 \leq i < j \leq n \ \mbox{ and } \  c^1_{ij} :  1\leq i,j \leq n. \] 

    As is customary, we use the same notation for a DG-category $\mathscr{C}$ and its total (endomorphism) algebra $\oplus_{X,Y \in Ob \mathscr{C}} \mathrm{hom} (X,Y)$.
    For example, $\mathscr{I}_n$ and $\mathscr{A}_n$ denote DG-algebras over $\k_n$. 
    On the other hand, given a ring map $\k \to \k_n$, $\mathscr{I}_n^\k$ and
    $\mathscr{A}_n^\k$ denote the corresponding DG-algebras over $\k$. 
The differential on $\mathscr{I}_n^\k$ (and similarly $\mathscr{A}_n^\k$) given by the same formulae as for $\mathscr{I}_n$ (and similarly for $\mathscr{A}_n)$ except that now some of the idempotents are identified and furthermore the algebra is generated by the given generators freely over $\k$ rather than $\k_n$. For example, in the case $\k = \K$ (as considered in \cite{EkNg}), all the $\delta_{ii}=1 \in \mathbb{K}$ and arbitrary words in $c_{ij}^p$ are allowed. We warn the reader that $\mathscr{I}_n^\mathbb{K}$ is quite different than considering $\mathscr{I}_n$ and viewing it as a DG-algebra over $\mathbb{K}$ by forgetting its $\k_n$-algebra structure.

\end{definition} 

A helpful way of thinking about these categories is to draw a circle with $n$ points ordered in counter-clockwise direction corresponding to the objects $X_i$ as in Figure
\ref{internalfig}. The set of generators $c^p_{ij}$ is in bijection with homotopy
classes of paths following a counter-clockwise flow that connects $i$ to $j$. The exponent $p$ counts the
number of times such paths pass from an additional marked point $*$ between $n$ and $1$.
Multiplication in the category can be thought of as broken paths, and the differential
corresponds to breaking the path into two paths of shorter length, with the
exception that in the case of $\d c_{ii}^1$, there is an additional idempotent term $e_i$ that appears in the
differential.

\begin{figure}[htb!]
\centering
\begin{tikzpicture}
\tikzset{vertex/.style = {style=circle,draw, fill,  minimum size = 2pt,inner        sep=1pt}}
\tikzset{edge/.style = {->,>=stealth',shorten >=8pt, shorten <=8pt  }}
\def \radius {1.5cm}
\def \margin {0} 

    \foreach \s in {1,2,3,4} {
\node[vertex] at ({360/5 * (\s)}:\radius) {} ;
  
    \draw ({360/5 * (\s)+\margin}:\radius) arc ({360/5 *(\s)+\margin}:{360/5*(\s+1)-\margin}:\radius);

}
  \draw ({0+\margin}:\radius) arc ({360/5 *0+\margin}:{360/5-\margin}:\radius);
\node at ({0}:\radius) {$*$} ;

\node[yshift=7, xshift=2] at ({360/5 }:\radius)  {$1$} ;
    \node[xshift=-5,yshift=2] at ({360/5 * (2)}:\radius)  {$2$} ;
    \node[xshift=-7, yshift=-4] at ({360/5 * (3)}:\radius)  {$\cdot$} ;
    \node[yshift=-9] at ({360/5 * (4)}:\radius)  {$n$} ;

    \draw[edge] ({360/5 +\margin}:\radius+3) arc ({360/5
    +\margin}:{360/5 *2-\margin}:\radius+3);
    \draw[edge] ({360/5 * (2)+\margin}:\radius+3) arc ({360/5
    *(2)+\margin}:{360/5*(3)-\margin}:\radius+3);
    \draw[edge] ({360/5 * (3)+\margin}:\radius+3) arc ({360/5
    *(3)+\margin}:{360/5*(4)-\margin}:\radius+3);
    \draw[edge] ({360/5 * (4)+\margin}:\radius+3) arc ({360/5
    *(4)+\margin}:{360/5*(6)-\margin}:\radius+3);

\end{tikzpicture}
    \caption{Visualisation of generators $c_{ij}^p$. }
	\label{internalfig}     
\end{figure}
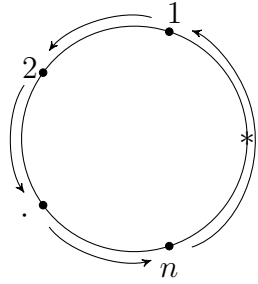

Next, let us introduce the following $A_\infty$-algebra, whose definition is attributed to Kontsevich in
\cite[Rmk. 3.11]{Se}.

\begin{definition} We define $\mathscr{K}_n$ to be the strictly unital $A_\infty$-category with
    $n$-objects $Z_1,\ldots, Z_{n}$ and morphisms given by, for $i \in \mathbb{Z}/n$,
    \begin{align*}
        \mathrm{hom}(Z_i,Z_i) &= \mathbb{K}e_i,  \\
        \mathrm{hom}(Z_i, Z_{i+1}) &= \mathbb{K} x_i, 
    \end{align*}
    where $e_i$ are idempotents. We again fix $n$ integers $(m_1,m_2,\ldots,m_n)$ and define gradings 
    \[ |x_i|= 1 + m_{i+1}-m_i \text{\ for\ } i=1,\ldots,n-1, \text{\ and \ } |x_{n}|= -1 +
    m_1 - m_n \]
The $A_\infty$-operations involving idempotents are determined by strict unitality. The only other
    products are given by:
    \[\mathfrak{m}_n(x_{i-1},\ldots , x_1 , x_0, x_{n-1},\ldots, x_{i+1}, x_i) = e_i, \text{\ \
    for \ \ } i \in \mathbb{Z}/n \]
\end{definition}

The fact that $A_\infty$-relations are satisfied is checked easily by the following equation that
holds for all $i
\in \mathbb{Z}/n$:
\begin{align*} &\mathfrak{m}_2(e_i, \mathfrak{m}_n(x_{i-1},\ldots , x_1 , x_0, x_{n-1},\ldots,
    x_{i+1}, x_i)) \\ &+ 
(-1)^{|e_i|-1} \mathfrak{m}_2( \mathfrak{m}_n(x_{i-1},\ldots , x_1 , x_0, x_{n-1},\ldots, x_{i+1},
x_i), e_i)  = \mathfrak{m}_2(e_i,x_i) - \mathfrak{m}_2(x_i,e_i) = 0 
\end{align*}

It can be shown that for $n \geq 3$,  $\mathscr{K}_n$ is not formal as an $A_\infty$-algebra, but we have 
\[ \mathscr{K}_2  \simeq \mathbf{k}_2 \langle x_1, x_2 \rangle / \langle x_2 x_1 =e_1, x_1 x_2 =
e_2 \rangle \ \  |x_1| = 1+ m_2-m_1, |x_2| = -1 + m_1 -m_2 \]

Recall that cobar and bar constructions extend to the setting of $A_\infty$-algebras. We denote these by
$\Omega$ and $\mathrm{B}$, respectively. (See \cite[Sec. 2]{EkLe} for a review that is compatible with our sign conventions.)

\begin{lem} $\mathscr{I}_n$ is isomorphic to $\Omega
    \mathrm{B}\mathscr{K}_n $. In particular, there exists a (strictly unital) $A_\infty$-functor
    \[ \mathfrak{f}: \mathscr{K}_n \to \mathscr{I}_n \]
    which is a quasi-isomorphism. 
\end{lem}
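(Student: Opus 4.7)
\emph{Proof plan.} My strategy is to prove the stronger statement that there is an explicit DG-algebra isomorphism $\mathscr{I}_n\cong\Omega\mathrm{B}\mathscr{K}_n$ over $\k_n$. The strictly unital $A_\infty$-quasi-isomorphism $\mathfrak{f}$ is then the composition $\mathscr{K}_n\to\Omega\mathrm{B}\mathscr{K}_n\xrightarrow{\cong}\mathscr{I}_n$ of the canonical unit of the cobar-bar adjunction (which is a quasi-isomorphism for any $A_\infty$-algebra) with this isomorphism. Concretely $\mathfrak{f}(x_i)=c^0_{i,i+1}$ for $i<n$ and $\mathfrak{f}(x_n)=c^1_{n,1}$, with higher components supplied tautologically by the adjunction.

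On generators, since $\bar{\mathscr{K}}_n$ is spanned by $x_1,\ldots,x_n$ with $x_i\colon Z_i\to Z_{i+1\bmod n}$, every non-trivial reduced bar word is cyclic of the form $[x_{i_k}|\ldots|x_{i_1}]$ with $i_{\ell+1}\equiv i_\ell+1\pmod n$, and is uniquely determined by the triple $(i,j,p)$ where $i=i_1$, $j\equiv i+k\pmod n$, and $p=\#\{\ell:i_\ell=n\}$ counts how many times the word passes the basepoint $*$. The relation $k=pn+(j-i)$ shows the admissible triples range exactly over the indexing set $\{(i,j,0):i<j\}\cup\{(i,j,p):p\ge1\}$ of the generators $c^p_{ij}$ of $\mathscr{I}_n$, so sending $[x_{i_k}|\ldots|x_{i_1}]\mapsto c^p_{ij}$ gives a bijection on $\k_n$-bimodule generators. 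A direct degree computation using $|x_i|=1+m_{i+1}-m_i$ for $i<n$, $|x_n|=-1+m_1-m_n$, the telescoping identity $\sum_\ell(m_{i_\ell+1}-m_{i_\ell})=m_j-m_i$, and the shift in $\Omega\mathrm{B}$ then yields total degree $1-2p+m_j-m_i=|c^p_{ij}|$.

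For the differentials, the cobar-bar differential splits as $d_\Omega+d_{\mathrm{B}}$. The cobar piece $d_\Omega$ is deconcatenation: splitting $(i,j,p)$ at position $\ell$ yields sub-words with data $(i,r,q_1)$ and $(r,j,q_2)$ where $r=i_{\ell+1}$ and $q_1+q_2=p$; collecting by $(r,q)$ reproduces exactly $\sum_{0\le q\le p}\sum_r\pm c^{p-q}_{rj}c^q_{ir}$, with the constraint $i<j$ for $c^0_{ij}$ correctly cutting off the range of $r$. The bar piece $d_{\mathrm{B}}$ in $\mathscr{K}_n$ reduces to the $\mathfrak{m}_n$ contribution (since $\mathfrak{m}_2$ acts trivially on non-identities in a strictly unital $A_\infty$-category). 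Applied to a cyclic length-$n$ sub-word, $\mathfrak{m}_n$ outputs the idempotent $e_r\in\k_n$: when the entire bar word has length $n$ (i.e.\ corresponds to $c^1_{ii}$), this contracts to the unit $e_i\in\k_n$ of $\Omega\mathrm{B}\mathscr{K}_n$, producing exactly the $\delta_{ij}$ term in $\partial c^1_{ij}$. For longer bar words, inserting $e_r$ in the middle gives zero in the reduced bar over $\k_n$ (since $e_r\notin\bar{\mathscr{K}}_n$), and no further term arises, consistent with the absence of additional contributions in the formula for $\partial c^p_{ij}$ when $p\ge2$ or $i\neq j$.

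The main technical obstacle will be matching the Koszul signs: the signs in each deconcatenation term and in the $\mathfrak{m}_n$-contraction, coming from the degree shifts in the bar and cobar constructions together with the $A_\infty$-operadic signs, must reproduce the $(-1)^{m_r+m_j}$ prescribed in $\partial c^p_{ij}$. Following the right-to-left composition conventions of \cite{EkLe,EL}, a careful bookkeeping of the shifted grading through the splits and through the $\mathfrak{m}_n$ output yields the stated signs; this is routine but tedious. Once the sign check is complete, the DG-algebra isomorphism $\mathscr{I}_n\cong\Omega\mathrm{B}\mathscr{K}_n$ is established, and the strictly unital $A_\infty$-quasi-isomorphism $\mathfrak{f}$ follows immediately from the adjunction.
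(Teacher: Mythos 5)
Your proposal is correct and follows essentially the same route as the paper: identify the reduced bar words of $\mathscr{K}_n$ (necessarily consecutive, ``winding'' strings of the $x_i$) with the generators $c^p_{ij}$ via $i+k=j+np$, match gradings by telescoping, recognise the deconcatenation part of the cobar differential as the quadratic terms and the single $\mathfrak{m}_n$-contraction as the $\delta_{ij}\delta_{1p}$ term, and then invoke the unit of the cobar--bar adjunction for the quasi-isomorphism. The only presentational difference is that the paper packages the differential comparison as the $A_\infty$-functor equation for the explicit strictly unital functor $\mathfrak{f}(x_{i+k-1},\ldots,x_i)=(-1)^{|c^p_{ij}|}c^p_{ij}$, which makes the unit term appear cleanly as $\mathfrak{f}_1(\mathfrak{m}_n(\cdots))=e_i$ and sidesteps the (convention-dependent) question of how a full-word contraction to a unit is recorded in the reduced bar complex; both treatments defer the sign bookkeeping to the conventions of \cite{EkLe}.
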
 
\begin{proof} Recall that for any coaugmented conilpotent DG-colgebra $\mathscr{C}$ and an augmented DG-algebra $\mathscr{A}$ we have an adjunction  
\[ \mathrm{hom}_{DG} (\Omega \mathscr{C}, \mathscr{A}) \simeq    \mathrm{hom}_{coDG} (\mathscr{C}, \mathrm{B} \mathscr{A}) \]
(see \cite[Sec. 2.2.2]{EkLe} for a recent exposition of this well-known adjunction.) 
In particular, for any $A_\infty$-algebra $\mathscr{A}$, applying the adjunction  
\[ \mathrm{hom}_{DG} (\Omega \mathrm{B} \mathscr{A}, \Omega \mathrm{B} \mathscr{A}) \simeq \mathrm{hom}_{coDG} (\mathrm{B} \mathscr{A}, \mathrm{B} \Omega \mathrm{B} \mathscr{A} ) \]
and tracing the image of the identity map $\Omega \mathrm{B} \mathscr{A} \to \Omega \mathrm{B} \mathscr{A}$, we get a quasi-isomorphism of DG-colgebras $\mathrm{B} \mathscr{A} \to \mathrm{B} \Omega \mathrm{B} \mathscr{A}$, or equivalently an $A_\infty$-algebra quasi-isomorphism
\[ \mathfrak{f}: \mathscr{A} \to \Omega \mathrm{B} \mathscr{A} \]
for any $A_\infty$-algebra $\mathscr{A}$. This is sometimes referred to as the standard resolution of $A$ (see for ex. \cite[Sec. 16.7.3]{drinfeld}. 

Chasing through the adjunction (see \cite[Sec. 2.2.2]{EkLe}), one sees that the $A_\infty$ functor $\mathfrak{f} = (\mathfrak{f}^i)_{i \geq 1}$ has an explicit description: A composable word consisting of $i$ letters in $\mathscr{A}$ is sent by $\mathfrak{f}^i$ to the corresponding element of $B \mathscr{A}$ (whose elements by definition consist of words in $\mathscr{A}$). 

Here, we apply this construction to $\mathscr{A} = \mathscr{K}_n$. Indeed, the $A_\infty$-functor $\mathfrak{f}$
    is the Yoneda embedding defined by sending $\mathfrak{f}(Z_i) = X_{i}$ at the level of objects, and on
    morphisms, we set for $k \geq 1$:
    \[ \mathfrak{f} (x_{i+k-1},x_{i+k-2}, \ldots, x_{i} ) =
    (-1)^{|c_{ij}^p|} c_{ij}^p \]
    where $j$ and $p$ are determined uniquely by the equation $i+k = j + np$, $1 \leq j \leq n$.
    Here, the indices $i$ appearing on generators $x_i$ are considered in $\mathbb{Z}/n$.
    The $A_\infty$-functor equation takes the form (using the rules \cite[Eq. 4]{EL}):
    \[ 
    \sum_{0 \leq q \leq p} \sum_r (-1)^{|c_{ir}^{q} |} \mathfrak{f}(x_{j+np-1},\ldots, x_{r+nq})
    \mathfrak{f}(x_{r+nq-1}, \ldots, x_i)   + \partial c_{ij}^p = \delta_{ij}
    \delta_{1p}  \]  
which coincides precisely with the definition of the differential on $\mathscr{I}_n$. 
    
Note that in \cite[Sec.
    2]{EkLe}, the signs for the cobar-bar constructions are written out explicitly. It is an easy
    exercise to check that the signs are compatible with the signs that appear in Definition
    \ref{defint} of the internal algebra. 

Finally, the fact that $\mathfrak{f}$ is quasi-isomorphism follows from the bar-cobar adjunction as explained above.
\end{proof}

\begin{cor} \label{coh} $H^*(\mathscr{I}_n)$ is generated over $\mathbf{k}_n$ by the elements  
    \[ c_{12}^0, c_{23}^0, \ldots, c_{(n-1)n}^0, c_{n1}^{1}. \]
\end{cor}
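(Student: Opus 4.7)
The plan is to apply the preceding lemma directly: it gives a quasi-isomorphism $\mathfrak{f}: \mathscr{K}_n \to \mathscr{I}_n$ of $A_\infty$-algebras, so it suffices to identify $H^*(\mathscr{K}_n)$ as a $\k_n$-bimodule and then read off the images of a set of $\k_n$-bimodule generators inside $\mathscr{I}_n$.

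First I would observe that $\mathscr{K}_n$ is minimal as an $A_\infty$-category: by construction its only non-trivial operations are the instances of $\mathfrak{m}_2$ forced by strict unitality together with the single cyclic $\mathfrak{m}_n$, and in particular $\mathfrak{m}_1 \equiv 0$. Hence $H^*(\mathscr{K}_n) = \mathscr{K}_n$ as a graded $\k_n$-bimodule, and since the idempotents $e_i$ are already part of the $\k_n$-bimodule structure, a bimodule generating set is given by the $n$ morphisms $x_1, \ldots, x_n$.

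Next I would track these generators through $\mathfrak{f}$. Specialising the explicit formula $\mathfrak{f}(x_{i+k-1}, \ldots, x_i) = (-1)^{|c^p_{ij}|} c^p_{ij}$ from the proof of the preceding lemma to length one $(k=1)$, the conditions $i+1 = j + np$ with $1 \leq j \leq n$ force $j = i+1$, $p = 0$ when $i \leq n-1$, and $j = 1$, $p = 1$ when $i = n$. This gives
\[
\mathfrak{f}(x_i) = \pm\, c^0_{i(i+1)}\quad (1 \leq i \leq n-1), \qquad \mathfrak{f}(x_n) = \pm\, c^1_{n1}.
\]
Since $\mathfrak{f}$ is a quasi-isomorphism, its induced map on cohomology is a $\k_n$-bimodule isomorphism, so the classes of $c^0_{12}, c^0_{23}, \ldots, c^0_{(n-1)n}, c^1_{n1}$ generate $H^*(\mathscr{I}_n)$ over $\k_n$, as claimed. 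No substantial obstacle arises: the content sits in the preceding lemma, and this corollary is a bookkeeping read-off, the only mild point being the wrap-around case $i = n$ in the index bijection $i \mapsto (j,p)$.
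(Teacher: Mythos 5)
Your argument is correct and is exactly the paper's (implicit) proof: the corollary is meant as an immediate read-off from the preceding lemma, using that $\mathscr{K}_n$ has vanishing $\mathfrak{m}_1$ so $H^*(\mathscr{K}_n)=\mathscr{K}_n$, and that the length-one component of $\mathfrak{f}$ sends $x_i$ to $\pm c^0_{i(i+1)}$ for $i<n$ and $x_n$ to $\pm c^1_{n1}$. The only caveat, shared with the paper's statement, is that this presumes $n\geq 2$ (for $n=1$ the element $c^1_{11}$ is not even a cycle), but that case never occurs in the paper's applications.
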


This computation can be used to show that the DG-subalgebra $\mathscr{A}_n$ of $\mathscr{I}_n$
generated by only the elements  
\[ c^0_{ij} : 1 \leq i < j \leq n \ \mbox{ and } \  c^1_{ij} :  1\leq i,j \leq n \] 
is quasi-isomorphic to $\mathscr{I}_n$.
Here we provide a general proof over any base ring $\k$. 

\begin{thm} \label{retract} For every base ring $\k$, let $\mathfrak{i} : \mathscr{A}^\k_n \to \mathscr{I}^{\mathbf{k}}_n$ be the inclusion map. There exists a retraction $\mathfrak{r} : \mathscr{I}^\k_n \to \mathscr{A}^\k_n$ given by
    \begin{align*}
        \r_{|\mathscr{A}^\k_n} & = \mathrm{Id}_{\mathscr{A}^\k_n}\\
        \r (c_{ij}^{p}) &= C_{p-1} \sum_{k_1,\ldots,k_{2p-2}}  (-1)^{\dagger} c_{k_{2p-2} j}^1 \ldots c_{k_1 k_2}^1 c_{ik_1}^1 \ \ \text{for\ } p>1, \\
    \end{align*}
    where 
    $ C_{p}= \frac{1}{p+1}{2p \choose p}$ is the $p^{th}$ Catalan number and $\dagger = m_{k_1}+\ldots+m_{k_{2p-2}}$,
    that induces a quasi-isomorphism. The latter follows from the existence of a chain homotopy $\H$ between $\mathfrak{i} \circ \mathfrak{r} : \mathscr{I}^{\k}_n \to \mathscr{I}^{\k}_n$ and the identity map defined by 
\begin{align} \label{1}
        \H_{|\mathscr{A}^{\k}_n} & = 0\\
     \label{2}   \H (c_{ij}^{p}) &= \sum_{q=2}^{p} C_{p-q+1} \sum_{k_1,\ldots,k_{2p-2q+1}}  (-1)^{\ddagger}  c_{k_{2p-2q+1} j}^q  c_{k_{2p-2q} k_{2p-2q+1}}^1 \ldots c_{k_1k_2}^1c_{ik_1}^1 \ \ \text{for\ } p>1 \\
        \label{3} \H (a_2a_1) &= \H(a_2)\r(a_1)+(-1)^{|a_2|} a_2\H(a_1) 
    \end{align}
    where $\ddagger = m_{k_1}+m_{k_2}+\ldots+m_{k_{2p-2q+1}}+m_j$.

\end{thm}
\begin{proof} 
First, we check that $\r$ is a DG-algebra map, and as $\mathscr{I}^\k_n$ is free as an $\k$-algebra, this amounts to showing that $\r$ commutes with the differential on
    the generators.
   
   For $p>1$,
      \begin{align*}
  \r \d (c_{ij}^p) =&   \sum_{k_1,k_2,\ldots,k_{2p-1}}  (-1)^{\dagger+m_j+m_{k_{2p-1}}} C_{p-1} \left(c_{k_{2p-1} j}^0c_{k_{2p-2} k_{2p-1}}^1 \ldots c_{ik_1}^1 + c_{k_{2p-1} j}^1 \ldots c_{k_1 k_2}^1 c_{ik_1}^0\right)    \\
  +& \sum_{k_1,k_2,\ldots,k_{2p-3}}  (-1)^{\dagger+m_j-m_{k_{2p-2}}}(C_{p-2}+C_2C_{p-3} + \cdots C_{p-3}C_2+ C_{p-2}) c_{k_{2p-3} j}^1\ldots  c_{ik_1}^1
   \end{align*} 
   whereas 
  \begin{align*}
  \d \r (c_{ij}^p) = C_{p-1} & \left( \sum_{k_1,k_2,\ldots,k_{2p-1}}  (-1)^{\dagger+m_j+m_{k_{2p-1}}} \left(c_{k_{2p-1} j}^0c_{k_{2p-2} k_{2p-1}}^1 \ldots  c_{ik_1}^1 + c_{k_{2p-1} j}^1 \ldots c_{k_1 k_2}^1 c_{ik_1}^0\right)   \right. \\
  & \left. + \sum_{k_1,k_2,\ldots,k_{2p-3}}  (-1)^{\dagger+m_j-m_{k_{2p-2}}}c_{k_{2p-3}j}^1 \ldots c_{ik_1}^1\right)
   \end{align*}
   after cancellations. 
   The fact that Catalan numbers satisfy the recurrence relation
   $$C_{p-1} = \sum_{q=0}^{p-2} C_qC_{p-q-2}$$ 
   shows that $\r$ is a DG-algebra homomorphism.

Next, we prove that $\H$ is a chain homotopy between $\mathfrak{i} \circ \mathfrak{r}$ and the identity map. Observe that, by $(\ref{3})$, it suffices to check the condition $\partial \H + \H \partial = \mathfrak{i} \circ \r - \mathrm{Id}$ on the generating set $\{ c_{ij}^p \} $ which is clearly satisfied for $p=0,1$. From now on let $p \geq 2$.

In order to compute $(\partial \H + \H \partial) (c_{ij}^p)$, first we have 
$$\H \partial (c_{ij}^p) = \sum_{q=0}^p \sum_k (-1)^{m_k+m_j} \H (c_{kj}^q c_{ik}^{p-q})$$
and, by $(\ref{3})$, this is a sum of two expressions, first of which is
\begin{align} \label{4}
\sum_{q=2}^{p-1} &\sum_{r=2}^q C_{q-r+1} C_{p-q-1} \sum_{k_s} (-1)^{\epsilon} c_{k_{2p-2r}j}^r c_{k_{2p-2r-1}k_{2p-2r}}^1\cdots c_{ik_1}^1 \\
\label{5} +&\sum_{r=2}^p C_{p-r+1}  \sum_{k_s} (-1)^{\epsilon} c_{k_{2p-2r+2}j}^r c_{k_{2p-2r-1}k_{2p-2r}}^1\cdots c_{k_1k_2}^1c_{ik_1}^0 
\end{align}
and the second is 
\begin{align}
\label{6} \sum_{q=0}^p &\sum_{r=2}^{p-q} C_{p-q-r+1} \sum_{k_s} (-1)^{1+\epsilon} c_{k_{2p-2q-2r+2}j}^q c_{k_{2p-2q-2r+1}k_{2p-2q-2r+2}}^r c_{k_{2p-2q-2r}k_{2p-2q-2r+1}}^1\cdots c_{ik_1}^1 \\
\label{7} =\sum_{q'=2}^p &\sum_{r=2}^{q'} C_{p-q'+1} \sum_{k_s} (-1)^{1+\epsilon} c_{k_{2p-2q'+2}j}^{q'-r} c_{k_{2p-2q'+1}k_{2p-2q'+2}}^r c_{k_{2p-2q'}k_{2p-2q'+1}}^1\cdots c_{ik_1}^1
\end{align}
where $\epsilon=\sum_s m_{k_s}$. 
Note that the expression $(\ref{7})$ (which will be cancelled by a term below) is obtained from expression $(\ref{6})$ by a change of variable $q'=q+r$. 

We also have
$$ \partial \H (c_{ij}^p) = \sum_{q'=2}^p C_{p-q'+1} \sum_l (-1)^{\ddagger} \partial \left( c_{l_{2p-2q'+1} j}^{q'}  c_{l_{2p-2q'} l_{2p-2q'+1}}^1 \ldots c_{l_1l_2}^1c_{il_1}^1 \right) $$
and, by the graded Leibniz rule, this is a sum of two expressions, first of which is based on  $\left( \partial c_{l_{2p-2q'+1} j}^{q'} \right) \cdot  c_{l_{2p-2q'} l_{2p-2q'+1}}^1 \ldots c_{l_1l_2}^1c_{il_1}^1 $ 
\begin{align} 
\label{8} \sum_{q'=2}^{p} & C_{p-q'+1}\sum_{r=1}^{q'}  \sum_{k_s} (-1)^{\epsilon} c_{k_{2p-2q'+2}j}^{q'-r} c_{k_{2p-2q'+1}k_{2p-2q'+2}}^r c_{k_{2p-2q'}k_{2p-2q'+1}}^1\cdots c_{ik_1}^1 \\
\label{9} +\sum_{q'=2}^{p} & C_{p-q'+1} \ \  \sum_{k_s} (-1)^{\epsilon} c_{k_{2p-2q'+2}j}^{q'} c_{k_{2p-2q'+1}k_{2p-2q'+2}}^0 c_{k_{2p-2q'}k_{2p-2q'+1}}^1\cdots c_{ik_1}^1 
\end{align}
Note that $r \geq 2$ part of $(\ref{8})$ cancels $(\ref{7})$ above and $(\ref{9})$ is cancelled by a sum in the second part of $ \H \partial (c_{ij}^p)$ whose remaining contributions are as follows 
\begin{align} 
\label{10} \sum_{q'=2}^{p} & C_{p-q'+1}  \sum_{k_s} (-1)^{1+\epsilon} c_{k_{2p-2q'}j}^{q'} c_{k_{2p-2q'-1}k_{2p-2q'}}^1\cdots c_{ik_1}^1 \\
\label{11}+\sum_{q'=2}^p & C_{p-q'+1}  \sum_{k_s} (-1)^{1+\epsilon} c_{k_{2p-2q'+2}j}^r c_{k_{2p-2q'-1}k_{2p-2q'}}^1\cdots c_{k_1k_2}^1c_{ik_1}^0 
\end{align}
where $(\ref{10})$ is a result of the Kronecker delta component in the differential of $c_{kl}^1$ . 
Observe that $(\ref{11})$ is cancelled by $(\ref{5})$ and therefore $(\partial \H + \H \partial) (c_{ij}^p)$ is equal to the sum of the expressions $(\ref{4})$, $(\ref{10})$, and what remains from $(\ref{8})$ after cancellation by $(\ref{7})$, namely
$$\sum_{q'=2}^{p} C_{p-q'+1} \sum_{k_s} (-1)^{\epsilon} c_{k_{2p-2q'+2}j}^{q'-1} c_{k_{2p-2q'+1}k_{2p-2q'+2}}^1 \cdots c_{ik_1}^1 $$
whose $q'=2$ term is precisely $\r (c_{ij}^p)$.
Also note that $q'=p$ term of $(\ref{10})$ is exactly $c_{ij}^p$. 
Therefore $(\partial \H + \H \partial) (c_{ij}^p)$ is equal to $(\mathfrak{i} \circ \r - \mathrm{Id}) (c_{ij}^p)$ plus the following (which we obtain by switching the order of summation between $q$ and $r$ variables in $(\ref{4})$ and renaming the variables $q'$ in $(\ref{8})$ and $(\ref{10})$ as $r+1$ and $r$, respectively):
$$\sum_{r=2}^{p-1} \left( \left( \sum_{q=r}^{p-1}  C_{q-r+1} C_{p-q-1}\right) -C_{p-r+1} +C_{p-r}\right) \sum_{k_s} (-1)^{\epsilon} c_{k_{2p-2r}j}^r c_{k_{2p-2r-1}k_{2p-2r}}^1\cdots c_{ik_1}^1 $$
But the above expression vanishes since
$$C_{p-r+1}=\left( \sum_{q=r}^{p-1}  C_{q-r+1} C_{p-q-1}\right)  +C_{p-r}$$
by the recurrence relation satisfied by the Catalan numbers.  
\end{proof}


\begin{rmk}
For the readers who prefer a less computational approach, we outline another proof that uses more
    abstract machinery but is less explicit.

\begin{figure}[htb!]
\centering
\begin{tikzpicture}
\tikzset{vertex/.style = {style=circle,draw, fill,  minimum size = 2pt,inner        sep=1pt}}
\tikzset{edge/.style = {->,>=stealth',shorten >=8pt, shorten <=8pt  }}
\def \radius {1.5cm}
\def \margin {0} 
\tikzset{->-/.style={decoration={ markings,
        mark=at position #1 with {\arrow{>}}},postaction={decorate}}}

    \foreach \s in {1,2,3,4} {
\node[vertex] at ({360/5 * (\s)}:\radius) {} ;
  
    \draw ({360/5 * (\s)+\margin}:\radius) arc ({360/5 *(\s)+\margin}:{360/5*(\s+1)-\margin}:\radius);

\draw[blue] ({360/5 * (\s) - 360*3/40}:\radius) arc ({360/5
    *(\s)+360*3/40}:{360/5*(\s)-360*3/40}:-\radius);

}

\node[vertex] at ({360/5 * 5}:\radius) {} ;
 
\draw[->-=.5] ({0}:\radius) arc ({360/5 *0}:{360/5}:\radius);
\draw[blue] ({360/5 * (5) - 360*3/40}:\radius) arc ({360/5
    *(5)+360*3/40}:{360/5*(5)-360*3/40}:-\radius);

    \node[xshift=14] at ({0}:\radius) {\tiny $n+1$} ;

    \node[yshift=7, xshift=2] at ({360/5 }:\radius)  {\tiny $1$} ;
    \node[xshift=-5,yshift=2] at ({360/5 * (2)}:\radius)  {\tiny $2$} ;
    \node[xshift=-7, yshift=-4] at ({360/5 * (3)}:\radius)  {$\cdot$} ;
    \node[yshift=-9] at ({360/5 * (4)}:\radius)  {\tiny $n$} ;

 \node at ({0}:\radius-0.7cm) {\tiny $Z_{n+1}$};
 \node at ({360/5}:\radius-0.6cm) {\tiny $Z_{1}$};
   \node at ({360/5 *2}:\radius-0.6cm) {\tiny $Z_{2}$};
   \node at ({360/5 *3}:\radius-0.6cm) {\tiny $Z_{\cdot}$};
   \node at ({360/5 *4}:\radius-0.6cm) {\tiny $Z_{n}$};

\end{tikzpicture}
    \caption{$\mathscr{K}_{n+1}$ represented as the partially wrapped category of $\mathbb{D}^2$
    with $(n+1)$ marked points. }
	\label{pwrap}     
\end{figure}

    The category $\mathscr{K}_{n+1}$ appears as the partially wrapped Fukaya category of the
    $\mathbb{D}^2$ with $(n+1)$ marked points, see \cite[Sec. 3.3]{HKK}. In Figure \ref{pwrap}, one can find the
Lagrangians in this category that correspond to the objects $Z_i$ for $i=1,\ldots, n+1$. As noted in
    \cite[Sec. 3.3]{HKK}, one can easily check that the object $Z_{n+1}[-|x_{n+1}|]$ is isomorphic to
the twisted complex:
\[ Z_1 \xrightarrow{x_1} Z_2[|x_1|-1] \xrightarrow{x_2} \cdots \xrightarrow{x_{n-1}}
Z_{n}[|x_1|+|x_2|+ \ldots +|x_{n-1}|+1-n] \]
where the isomorphism maps are given by $x_{n}$ and $x_{n+1}$. 

    Thus, if we take the full subcategory of $\mathscr{K}_{n+1}$ consisting of objects $Z_1,\ldots,Z_{n}$
    and localise it along the above twisted complex representing $Z_{n+1}$, we should obtain $\mathscr{K}_{n}$ (as
    shown in \cite[Prop. 3.5]{HKK}, localising along the object $Z_{n+1}$ corresponds to the removal of
the marked point near which it is supported). 

Now, it is an easy exercise in localisation of DG-categories (\cite{drinfeld}, cf. \cite[Sec
3.5]{HKK}) to show that
localisation of $Z_1,\ldots,Z_{n}$ along the above twisted complex can be identified
    directly with $\mathscr{A}_{n}$ (cf. proof of Prop. \ref{drindrin} below). 
\end{rmk}

We note that cohomology of $\mathscr{I}_n$ and $\mathscr{I}_n^\k$ can be very
different. In what follows we will use the following result that follows from Theorem \ref{retract} and the Proposition \ref{drindrin} given below :
\begin{align} \mathscr{I}_2^\mathbb{K} \simeq \mathbb{K}[z,z^{-1}] \end{align}
In particular, note that this is infinite-dimensional over $\mathbb{K}$. On the other hand, as we proved in Corollary
\ref{coh}, cohomology of $\mathscr{I}_2$ is generated by two elements over $\k_2$. 

\begin{prop}
\label{drindrin}
$\mathscr{A}_2^\mathbb{K}$ is quasi-isomorphic to $\mathbb{K}[z,z^{-1}] $ considered with trivial differential. \end{prop}
\begin{proof}
To simplify the notation, let us denote $c_{12}^0, c_{21}^1, c_{11}^1, c_{22}^1$, and $c_{12}^1$ by $s, t, k, l$, and $u$, respectively. Thus, the algebra $\mathscr{A}_2^\mathbb{K}$ is the semifree DG-algebra, generated over $\mathbb{K}$ freely by $s,t,k,l,u$ with the differential determined by
\begin{align*}
dk &= 1-ts \\
dl &= 1-st \\
du &= ls-sk
\end{align*}
where the gradings are determined as follows $|s|=m, |t|=-m, |k|=|l|=-1, |u|=m-2$ for an arbitrary fixed integer $m$. 
We would like to show that this DG-algebra is quasi-isomorphic to the algebra $\mathbb{K}[z,z^{-1}]$ with $|z|=m$ considered as a DG-algebra with trivial differential.
We did not manage to construct a quasi-isomorphism explicitly even though, surely, such explicit construction is feasible. Instead, we will exploit the fact that the algebra $\mathscr{A}_2^\mathbb{K}$ is given by Drinfeld's construction of DG-quotient (as in \cite{drinfeld}).
Namely, let us consider the derived category $\mathcal{A}= D^b(\mathbb{K}[z])$. Let $L$ denote the object corresponding to $\mathbb{K}[z]$, and $S = Cone(L[-m] \xrightarrow{z} L) $, and $\mathcal{A}_{0}$ be the thick triangulated subcategory of $\mathcal{A}$ generated by $S$. Equivalently, $\mathcal{A}_{0}$ corresponds to modules of $\mathbb{K}[z]$ with support at 0. By the main result of \cite[Thm. 3.2]{miyachi} Verdier quotient $\mathcal{A}/ \mathcal{A}_0$ is identified with $D^b(\mathbb{K}[z,z^{-1}])$ and the endomorphism algebra of $L$ in this quotient is $End_{\mathcal{A}/\mathcal{A}_0}(L)=\mathbb{K}[z,z^{-1}]$. On the other hand, by viewing $\mathbb{K}[z]$ as DG-algebra with trivial differential, we can consider the DG-enhancement of $\mathcal{A}$, call it $\mathcal{A}^{pre}$. We can then apply Drinfeld's construction given in \cite[Sec. 3.1]{drinfeld} and take the DG quotient of $\mathcal{A}^{pre}$ by the full DG-subcategory $\mathcal{A}_0^{pre}$ consisting of $S$ (see \cite[Ex. 3.7.1]{drinfeld} for a similar looking example corresponding to $\mathscr{A}_2^{\k_2}$). The endomorphism algebra of $L$ in $\mathcal{A}^{pre}/\mathcal{A}_0^{pre}$ is generated by $s,t,k,l,u$ where these generators can be identified with the morphisms of twisted complexes given in Figure \ref{dgquotient}. Moreover, the differential of these generators are easily computed to give an identification of $end_{\mathcal{A}^{pre}/\mathcal{A}_0^{pre}}(L)$ with $\mathscr{A}_2^\mathbb{K}$. Now, by the general theorem of Drinfeld \cite[Thm. 3.4]{drinfeld}, the homotopy categories of these two different quotients are isomorphic. In particular, \[ H^*(end_{\mathcal{A}^{pre}/\mathcal{A}_0^{pre}}(L)) \simeq End_{\mathcal{A}/\mathcal{A}_0}(L). \] It follows that
\[ H^*\mathscr{A}_2^{\mathbb{K}} = \mathbb{K}[z,z^{-1}] \]
Finally, it is well known that $\mathbb{K}[z,z^{-1}]$ is intrinsically formal for any value of $m$. Since one can easily compute $HH^*(\mathbb{K}[z,z^{-1}]) = \mathbb{K}[z,z^{-1},\partial_z]$ and check that $HH^2_{<0}(\mathbb{K}[z,z^{-1}])=0$. Therefore, it follows that $\mathscr{A}_2^{\mathbb{K}}$ is quasi-isomorphic to $\mathbb{K}[z,z^{-1}]$.  
\begin{figure}[h!]
    \centering
\tikzstyle{level 1}=[level distance=3cm, sibling distance=1.8cm]
\tikzstyle{level 2}=[level distance=3cm, sibling distance=0.8cm]
\tikzstyle{level 3}=[level distance=3cm, sibling distance=0.8cm]

\tikzstyle{bag} = [circle, minimum width=3pt,fill, inner sep=0pt]
\tikzstyle{end} = [circle, minimum width=3pt,fill, inner sep=0pt]
    \tikzset{edge/.style = {->,>=stealth' }}

    \begin{tikzpicture} [scale=1,auto=left,every node/.style={circle}]

\node (n1) at (0,2) {$L$};
\node (n2) at (0,4) {$L$}; 
\node (n3) at (0.2,3) {$z$};

\node (n4) at (4,4) {$L$};
\node (n5) at (4,2) {$L$}; 
\node (n6) at (2,2) {$L[1-m]$};
\node (n7) at (4,0) {$L$};
\node at (3.2, 2.2) {$z$};
\node at (3.8,3) {$1$};
\node at (3,0.6) {$1$};

\node (n8) at (8,4) {$L$};
\node (n9) at (8,2) {$L$}; 
\node (n10) at (6,2) {$L[1-m]$};
\node (n11) at (8,0) {$L$};
\node at (7.2, 2.2) {$z$};
\node at (7.4,3) {$1$};
\node at (7,0.6) {$1$};

\node (n12) at (12,4) {$L$};
\node (n13) at (12,2) {$L$}; 
\node (n14) at (10,2) {$L[1-m]$};
\node (n15) at (12,0) {$L$};
\node at (11.2, 2.2) {$z$};
\node at (11.8,3) {$1$};
\node at (11.8,1) {$1$};

\node (n16) at (16,4) {$L$};
\node (n17) at (16,2) {$L$}; 
\node (n18) at (14,2) {$L[1-m]$};
\node (n19) at (16,0) {$L$};
\node at (15.2, 2.2) {$z$};
\node at (15.4,3) {$1$};
\node at (15.8,1) {$1$};

\draw[edge] (n2) -- (n1)  ;

\draw[edge] (n4) -- (n5)  ;
\draw[edge] (n6) -- (n5)  ;
\draw[edge] (n6) -- (n7)  ;

\draw[edge] (n8) -- (n10)  ;
\draw[edge] (n10) -- (n9)  ;
\draw[edge] (n10) -- (n11)  ;

\draw[edge] (n12) -- (n13)  ;
\draw[edge] (n14) -- (n13)  ;
\draw[edge] (n13) -- (n15)  ;

\draw[edge] (n16) -- (n18)  ;
\draw[edge] (n18) -- (n17)  ;
\draw[edge] (n17) -- (n19)  ;

\end{tikzpicture}
    \caption{Primitive generators in the Drinfeld quotient, identified with $s,t,k,l,u$ from left-to-right}
    \label{dgquotient} 
\end{figure}
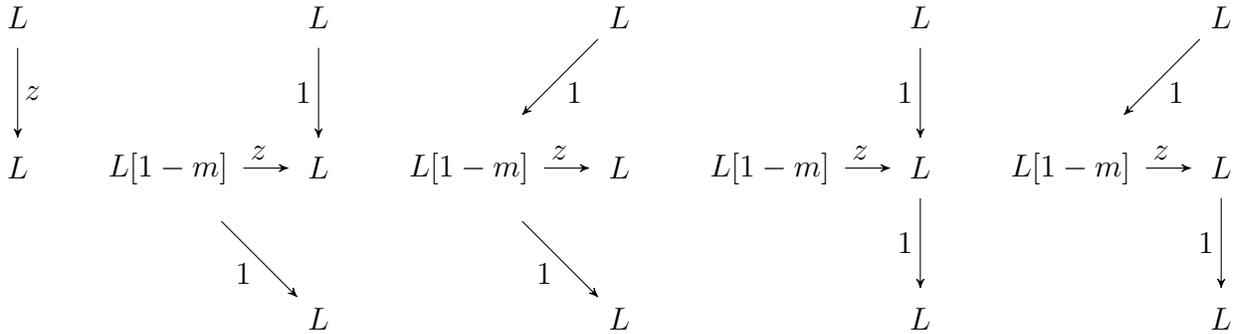

\end{proof}

\begin{rmk} Another proof of the Proposition \ref{drindrin} can be given by using partially wrapped Fukaya categories. Namely, consider a cylinder $T^*S^1$ with a single stop along one of its boundary components. The cotangent fiber $L$ generates this category and one has $End(L) = \mathbb{K}[z]$. On the other hand, the object supported near the stop is quasi-isomorphic to $Cone(L \xrightarrow{z} L)$, localizing with respect to that object corresponds to removing the stop (by \cite[Prop. 3.5]{HKK}). Thus, we can identify the endomorphism algebra of $L$ in the fully wrapped Fukaya category of $T^*S^1$ as this localisation which (as we showed in the proof of Proposition \ref{drindrin}) coincides with $\mathscr{A}_2^\mathbb{K}$. Now, one can directly compute the endomorphism algebra of $L$ in the fully wrapped Fukaya category of $T^*S^1$, which gives $\mathbb{K}[z,z^{-1}]$.
\end{rmk}

\section{Plumbings of cotangent bundles of 2-spheres}
\label{g=0}

Throughout this section, we assume that $\Gamma$ is a finite graph with $\mathbf{g}=0$. 
We first construct a Legendrian link $L_\Gamma$ in $\#^k(S^1 \times S^2, \xi_{std})$ associated to $\Gamma$.  
An important feature of this construction is that the symplectic $4$-manifold obtained by adding
symplectic $2$-handles along $L_\Gamma$ to $\#^k(S^1 \times \mathbb{D}^3, \omega_{std})$ is
homotopic, as a Weinstein manifold, to the plumbing of copies of $T^*S^2$ with respect to the graph $\Gamma$. 

After the construction, we compute the Chekanov-Eliashberg DG-algebra $\mathscr{B}_\Gamma = CE^*(L_\Gamma)$ of $L_\Gamma$ and prove that it is quasi-isomorphic to the derived multiplicative preprojective algebra associated to $\Gamma$.

\subsection{A Legendrian surgery presentation of plumbings}\label{resolution}

Choose a spanning tree $T$ of $\Gamma$ and embed it in the upper half plane $\mathbb{R}
\times \mathbb{R}_{\geq 0}$ such that all the vertices  lie on $\mathbb{R}
\times  \{0\}$. 
This can be easily done recursively (in multiple ways). 
We give an illustrating picture in Fig.~(\ref{fig4}) and leave the elaboration of the recursive argument to the reader.

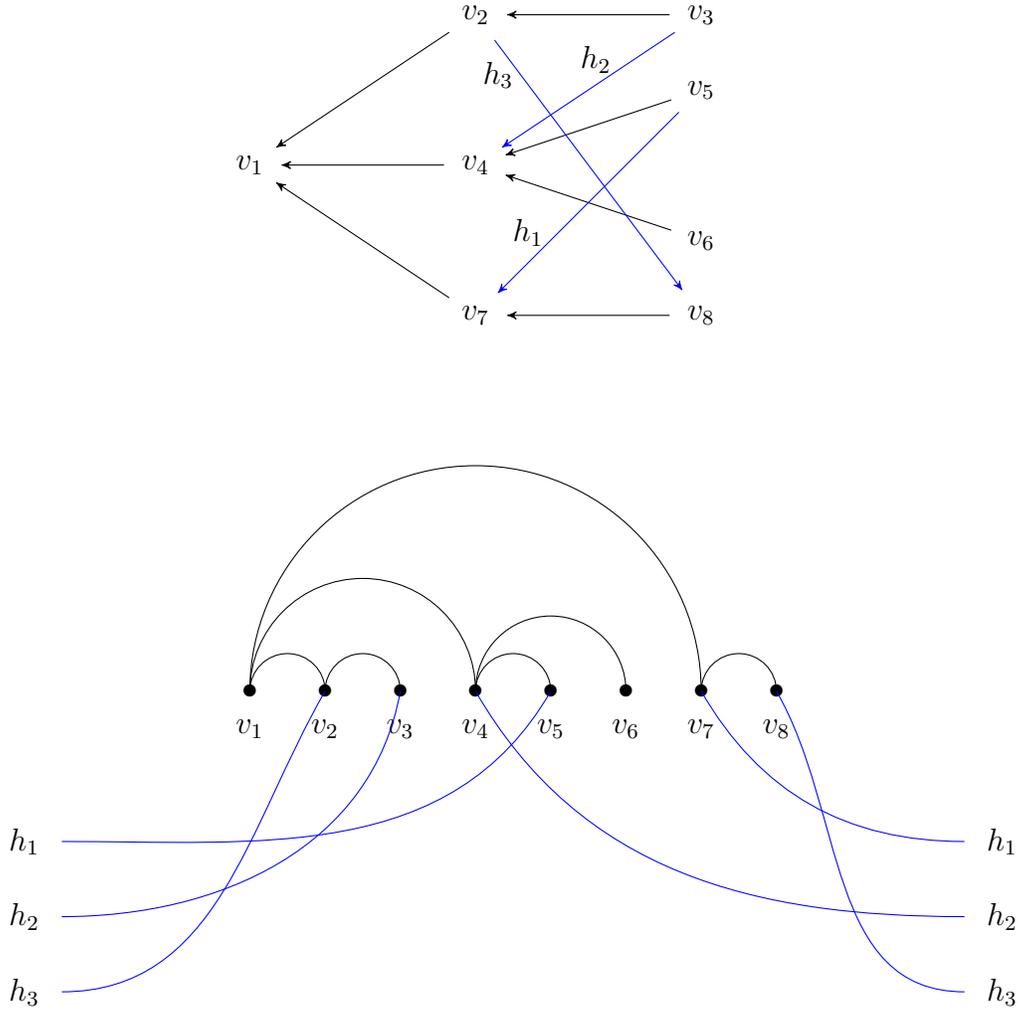
\begin{figure}[htb!]
	\centering

\tikzstyle{level 1}=[level distance=3cm, sibling distance=1.8cm]
\tikzstyle{level 2}=[level distance=3cm, sibling distance=0.8cm]
\tikzstyle{level 3}=[level distance=3cm, sibling distance=0.8cm]

\tikzstyle{bag} = [circle, minimum width=3pt,fill, inner sep=0pt]
\tikzstyle{end} = [circle, minimum width=3pt,fill, inner sep=0pt]
    \tikzset{edge/.style = {->,>=stealth' }}

    \begin{tikzpicture} [scale=1,auto=left,every node/.style={circle}]

    \node (n1) at (0,0) {$v_1$};
       \node (n2) at (3,2)  {$v_2$};
  \node (n5) at (3,0)  {$v_4$};
  \node (n9) at (3,-2) {$v_7$};
  \node (n4) at (6,2)  {$v_3$};
  \node (n7) at (6,1)  {$v_5$};
  \node (n8) at (6,-1)  {$v_6$};
 \node (n10) at (6,-2)  {$v_8$};

  \foreach \to/\from in {n1/n2,n1/n5,n1/n9,n2/n4,n5/n7,n5/n8,n9/n10}
    \draw[edge] (\from) -- (\to);
    
  \draw[blue][edge] (n2) -- (n10); 
  \draw[blue][edge] (n4) -- (n5); 
   \draw[blue][edge] (n7) -- (n9); 

 \node at (3.7, -0.9) {$h_1$}; 
  \node at (4.6, 1.4) {$h_2$}; 
   \node at (3.3,1.2) {$h_3$}; 
       
        \begin{scope}[shift={(0,-7)}] 
        \node at (0,0) {$\bullet$};
        \node at (1,0) {$\bullet$};
        \node at (2,0) {$\bullet$};
        \node at (3,0) {$\bullet$};
        \node at (4,0) {$\bullet$};
        \node at (5,0) {$\bullet$};
        \node at (6,0) {$\bullet$};
        \node at (7,0) {$\bullet$};

        \node at (0,-0.5) {$v_1$};
        \node at (1,-0.5) {$v_2$};
            \node at (2,-0.5) {$v_3$};
        \node at (3,-0.5) {$v_4$};
        \node at (4,-0.5) {$v_5$};
        \node at (5,-0.5) {$v_6$};
        \node at (6,-0.5) {$v_7$};
        \node at(7,-0.5)  {$v_8$};
 
        \draw (1,0) arc(0:180:0.5);
        \draw (2,0) arc(0:180:0.5);
        \draw (3,0) arc(0:180:1.5);
        \draw (4,0) arc(0:180:0.5);
        \draw (5,0) arc(0:180:1);
        \draw (6,0) arc(0:180:3);
        \draw (7,0) arc(0:180:0.5);
      
        \node at (-3,-2) {$h_1$};
        \node at (-3,-3) {$h_2$};
        \node at (-3,-4) {$h_3$};
        \node at (10,-2) {$h_1$};
        \node at (10,-3) {$h_2$};
        \node at (10,-4) {$h_3$};

        \draw [blue] (-2.5,-2) to[in=240,out=0] (4,0); 
        \draw [blue] (9.5,-2) to[in=300,out=180] (6,0); 
        \draw [blue] (-2.5,-3)[in=260,out=0] to (2,0); 
        \draw [blue] (9.5,-3)[in=300,out=180] to (3,0); 
        \draw [blue] (-2.5,-4)[in=240,out=0] to (1,0); 
            \draw [blue] (9.5,-4)[in=300,out=180] to (7,0); 

         \end{scope}

    \end{tikzpicture}
	\caption{The graph $\Gamma$ in a standard form with respect to a chosen orientation}
	\label{fig4}     
\end{figure}

Corresponding to the spanning tree $T$, we will have 2-handles in the Legendrian surgery picture of $X_\Gamma$. 
The complement of $T$ will correspond to 1-handles. 

Before we put the graph $\Gamma$ in a standard form, we choose an orientation and order on the edges of $\Gamma$. 
Note that, as is the case for the multiplicative preprojective algebra $\Lambda_\Gamma$, $\mathscr{B}_\Gamma$ is also independent of these choices.
Then for each arrow $h \in \Gamma \setminus T$ we introduce a pair of arcs in the
lower half plane $\mathbb{R} \times \mathbb{R}_{< 0}$ as in Fig.~(\ref{fig4}). More precisely, we fix $m, M$ as real numbers smaller and larger than the $x$-coordinates of $v_1$ and $v_s$, respectively, and if $h$ is an arrow from $v$ to $w$ which is the $i^{th}$ arrow in $\Gamma \setminus T$, then there will be a curve from $v$ to the point $(m, -i)$ with nonnegative slope throughout and another curve from $w$ to the point $(M, -i)$ with nonpositive slope, chosen so that there are no triple intersections.
We assume that the arrows in $T$ are oriented from right to left and also assume that in the ordering of all the edges of $\Gamma$, the edges in $T$ are ahead of the edges in $\Gamma \setminus T$.

Once $\Gamma$ is in a standard form, the endpoints of each pair of arcs corresponding to the same arrow in $\Gamma \setminus T$ is replaced by the feet, i.e. the attaching sphere $S^0 \times D^3$, of a $1$-handle and each vertex $v$ is replaced by the standard Lagrangian projection of a Legendrian unknot $U_v$ with $tb=-1$.
For each arrow $a$ from $v$ to $w$, we introduce an arc on the left half of $U_v$ and a meridian $m_a$ of the right half of $U_w$, i.e. the boundary of a small disk intersecting it transversely once, (so that all meridians and arcs on an unknot are ordered according to the chosen order of the edges), 
replace $a$ by a band from the arc on $U_v$ to an arc on $m_a$, and then connect sum $U_v$ and $m_a$ using this band. 
None of these bands contain a twist other than the one near the left foot of the corresponding 1-handle (dictated by the resolution operation \cite[Def. 2.3.]{EkNg}).
There is no difference between the treatment of arrows in $T$ and the rest, except that for $a \in T$, the corresponding band is planar whereas for $a \notin T$  the band goes over the associated $1$-handle and it may go over/under other such bands.
Note that these connected sum operations are performed directly on the Lagrangian projection without reverting back to the front projection. To justify this, observe that one could use the front projection and perform the connected sum there then one would end up with a more complicated diagram similar to the diagram given in the middle part of Figure \ref{fig0}. However, as in that example, there is a simplifying Reidemeister III move that gives the bottom part of Figure \ref{fig0}. This is a local move and we apply this move in all our connected sums right away. Figures \ref{fig4}, \ref{fig5} and \ref{fig6} describe how the Legendrian $L_\Gamma$ is constructed
in a sufficiently complicated example, from which the general pattern is clear. 

\begin{figure}[h!]
    \centering
    \begin{tikzpicture}

    \tikzset{->-/.style={decoration={ markings,
                mark=at position #1 with {\arrow{>}}},postaction={decorate}}}

        \begin{scope}[scale=0.25]

    \draw (8.1,1.93) arc(10:180:1.1);
    \draw [xshift=-10] (8.1,1.95) arc(30:180:0.9);

 \draw [xshift=150] (8,2) arc(0:180:1.35);
    \draw [xshift=140] (8,1.9) arc(-3:180:1);
    
 \draw [xshift=450] (8,2) arc(0:180:1.35);
    \draw [xshift=440] (8,1.9) arc(-3:180:1);
    
    \draw [xshift=900] (8,2) arc(0:180:1.35);
    \draw [xshift=890] (8,1.9) arc(-3:180:1);
       
    \draw [xshift=-10] (5.7,1.96) arc(180:-4:6.9);
    \draw (5.7,1.9) arc(180:0:6.5);
      
    \draw [xshift=-10] (4.9,1) arc(180:0:15.3);
    \draw  (4.9,1.64) arc(176:5:14.9);

    \draw [xshift=430] (5.1,0.9) arc(180:14:4.4);
    \draw [xshift=440] (5.1,1.4) arc(173:11:4);

        \end{scope}

 \draw [thick] (-2,-2) circle (0.5); 
    \draw [thick] (-2,-4) circle (0.5); 
    \draw [thick] (-2,-6) circle (0.5); 

    \draw [thick] (13.5,-2) circle (0.5); 
    \draw [thick] (13.5,-4) circle (0.5); 
    \draw [thick] (13.5,-6) circle (0.5); 

    \draw [blue] [yshift=-3](-1.5,-2) to[in=240,out=0] (6.0,-0.4); 
    \draw [blue] [yshift=3](-1.5,-2) to[in=240,out=0] (5.8,-0.55); 
    \draw [blue] [yshift=-3](-1.5,-4)[in=260,out=0] to (3.3,-0.39); 
    \draw [blue] [yshift=3](-1.5,-4)[in=260,out=0] to (3.13,-0.48);  
    \draw [blue] [yshift=-3](-1.5,-6)[in=240,out=0] to (2,-0.39); 
    \draw [blue] [yshift=3](-1.5,-6)[in=240,out=0] to (1.83,-0.48);

        \draw [blue][yshift=-3](12,-2) to[in=300,out=180] (9.27,-0.39); 
        \draw [blue][yshift=3](12,-2) to[in=300,out=180] (9.4,-0.48); 
    
        \draw [blue]  [yshift=3](12,-2) to [yshift=-6](13,-2);
        \draw [blue]  [yshift=-3](12,-2) to [yshift=6](13,-2);

        \draw [blue][yshift=-3] (12,-4)[in=300,out=180] to (5.27,-0.39); 
        \draw [blue][yshift=3] (12,-4) to[in=300,out=180] (5.44,-0.48); 

  \draw [blue]  [yshift=3](12,-4) to [yshift=-6](13,-4);
        \draw [blue]  [yshift=-3](12,-4) to [yshift=6](13,-4);

        \draw [blue][yshift=-3] (12,-6)[in=280,out=180] to (10.52,-0.39); 
        \draw [blue][yshift=3] (12,-6) to[in=280,out=180] (10.68,-0.54); 
  

   \draw [blue]  [yshift=3](12,-6) to [yshift=-6](13,-6);
        \draw [blue]  [yshift=-3](12,-6) to [yshift=6](13,-6);

        \begin{scope}[scale=0.25, yscale=2]

        \begin{scope}[xshift=0]

    \draw [thick=1.5] (2.5,1) to[in=90,out=190] (1.7,0);

   \draw [thick=1.5] (2.5,-1) to[in=270,out=170] (1.7,0);

    \draw [thick=1.5] (2.5,1) to[in=135,out=10] (3.7,0.3);
    \draw [thick=1.5] (2.5,-1)to[in=225,out=350] (3.7,-0.3)  ;

    \draw [thick=1.5] (5.5,1) to[in=90,out=350] (6.3,0);
    \draw [thick=1.5] (5.5,1) to[in=45,out=170] (4.3,0.3);
    \draw [thick=1.5, ->-=.2]  (6.3,0)to[in=10,out=270] (5.5,-1);
    \draw [thick=1.5] (5.5,-1)to[in=315,out=190] (4.3,-0.3)  ;

    \draw [thick=1.5] (3.7,0.3) to (4.3,-0.3);
    \draw [thick=1.5] (3.7,-0.3) to (3.9,-0.1);
    \draw [thick=1.5] (4.1,0.1) to (4.3,0.3);

    \end{scope}

      \begin{scope}[xshift=150]

    \draw [thick=1.5] (2.5,1) to[in=90,out=190] (1.7,0);

   \draw [thick=1.5] (2.5,-1) to[in=270,out=170] (1.7,0);

    \draw [thick=1.5] (2.5,1) to[in=135,out=10] (3.7,0.3);
    \draw [thick=1.5] (2.5,-1)to[in=225,out=350] (3.7,-0.3)  ;

    \draw [thick=1.5] (5.5,1) to[in=90,out=350] (6.3,0);
    \draw [thick=1.5] (5.5,1) to[in=45,out=170] (4.3,0.3);
    \draw [thick=1.5, ->-=.2]  (6.3,0)to[in=10,out=270] (5.5,-1);
    \draw [thick=1.5] (5.5,-1)to[in=315,out=190] (4.3,-0.3)  ;

    \draw [thick=1.5] (3.7,0.3) to (4.3,-0.3);
    \draw [thick=1.5] (3.7,-0.3) to (3.9,-0.1);
    \draw [thick=1.5] (4.1,0.1) to (4.3,0.3);

    \end{scope} 

 \begin{scope}[xshift=300]

    \draw [thick=1.5] (2.5,1) to[in=90,out=190] (1.7,0);

   \draw [thick=1.5] (2.5,-1) to[in=270,out=170] (1.7,0);

    \draw [thick=1.5] (2.5,1) to[in=135,out=10] (3.7,0.3);
    \draw [thick=1.5] (2.5,-1)to[in=225,out=350] (3.7,-0.3)  ;

    \draw [thick=1.5] (5.5,1) to[in=90,out=350] (6.3,0);
    \draw [thick=1.5] (5.5,1) to[in=45,out=170] (4.3,0.3);
    \draw [thick=1.5, ->-=.2]  (6.3,0)to[in=10,out=270] (5.5,-1);
    \draw [thick=1.5] (5.5,-1)to[in=315,out=190] (4.3,-0.3)  ;

    \draw [thick=1.5] (3.7,0.3) to (4.3,-0.3);
    \draw [thick=1.5] (3.7,-0.3) to (3.9,-0.1);
    \draw [thick=1.5] (4.1,0.1) to (4.3,0.3);

    \end{scope}

      \begin{scope}[xshift=450]

    \draw [thick=1.5] (2.5,1) to[in=90,out=190] (1.7,0);

   \draw [thick=1.5] (2.5,-1) to[in=270,out=170] (1.7,0);

    \draw [thick=1.5] (2.5,1) to[in=135,out=10] (3.7,0.3);
    \draw [thick=1.5] (2.5,-1)to[in=225,out=350] (3.7,-0.3)  ;

    \draw [thick=1.5] (5.5,1) to[in=90,out=350] (6.3,0);
    \draw [thick=1.5] (5.5,1) to[in=45,out=170] (4.3,0.3);
    \draw [thick=1.5, ->-=.2]  (6.3,0)to[in=10,out=270] (5.5,-1);
    \draw [thick=1.5] (5.5,-1)to[in=315,out=190] (4.3,-0.3)  ;

    \draw [thick=1.5] (3.7,0.3) to (4.3,-0.3);
    \draw [thick=1.5] (3.7,-0.3) to (3.9,-0.1);
    \draw [thick=1.5] (4.1,0.1) to (4.3,0.3);

    \end{scope} 

 \begin{scope}[xshift=600]

    \draw [thick=1.5] (2.5,1) to[in=90,out=190] (1.7,0);

   \draw [thick=1.5] (2.5,-1) to[in=270,out=170] (1.7,0);

    \draw [thick=1.5] (2.5,1) to[in=135,out=10] (3.7,0.3);
    \draw [thick=1.5] (2.5,-1)to[in=225,out=350] (3.7,-0.3)  ;

    \draw [thick=1.5] (5.5,1) to[in=90,out=350] (6.3,0);
    \draw [thick=1.5] (5.5,1) to[in=45,out=170] (4.3,0.3);
    \draw [thick=1.5, ->-=.2]  (6.3,0)to[in=10,out=270] (5.5,-1);
    \draw [thick=1.5] (5.5,-1)to[in=315,out=190] (4.3,-0.3)  ;

    \draw [thick=1.5] (3.7,0.3) to (4.3,-0.3);
    \draw [thick=1.5] (3.7,-0.3) to (3.9,-0.1);
    \draw [thick=1.5] (4.1,0.1) to (4.3,0.3);

    \end{scope}

      \begin{scope}[xshift=750]

    \draw [thick=1.5] (2.5,1) to[in=90,out=190] (1.7,0);

   \draw [thick=1.5] (2.5,-1) to[in=270,out=170] (1.7,0);

    \draw [thick=1.5] (2.5,1) to[in=135,out=10] (3.7,0.3);
    \draw [thick=1.5] (2.5,-1)to[in=225,out=350] (3.7,-0.3)  ;

    \draw [thick=1.5] (5.5,1) to[in=90,out=350] (6.3,0);
    \draw [thick=1.5] (5.5,1) to[in=45,out=170] (4.3,0.3);
    \draw [thick=1.5, ->-=.2]  (6.3,0)to[in=10,out=270] (5.5,-1);
    \draw [thick=1.5] (5.5,-1)to[in=315,out=190] (4.3,-0.3)  ;

    \draw [thick=1.5] (3.7,0.3) to (4.3,-0.3);
    \draw [thick=1.5] (3.7,-0.3) to (3.9,-0.1);
    \draw [thick=1.5] (4.1,0.1) to (4.3,0.3);

    \end{scope}

 \begin{scope}[xshift=900]

    \draw [thick=1.5] (2.5,1) to[in=90,out=190] (1.7,0);

   \draw [thick=1.5] (2.5,-1) to[in=270,out=170] (1.7,0);

    \draw [thick=1.5] (2.5,1) to[in=135,out=10] (3.7,0.3);
    \draw [thick=1.5] (2.5,-1)to[in=225,out=350] (3.7,-0.3)  ;

    \draw [thick=1.5] (5.5,1) to[in=90,out=350] (6.3,0);
    \draw [thick=1.5] (5.5,1) to[in=45,out=170] (4.3,0.3);
    \draw [thick=1.5, ->-=.2]  (6.3,0)to[in=10,out=270] (5.5,-1);
    \draw [thick=1.5] (5.5,-1)to[in=315,out=190] (4.3,-0.3)  ;

    \draw [thick=1.5] (3.7,0.3) to (4.3,-0.3);
    \draw [thick=1.5] (3.7,-0.3) to (3.9,-0.1);
    \draw [thick=1.5] (4.1,0.1) to (4.3,0.3);

    \end{scope}

      \begin{scope}[xshift=1050]

    \draw [thick=1.5] (2.5,1) to[in=90,out=190] (1.7,0);

   \draw [thick=1.5] (2.5,-1) to[in=270,out=170] (1.7,0);

    \draw [thick=1.5] (2.5,1) to[in=135,out=10] (3.7,0.3);
    \draw [thick=1.5] (2.5,-1)to[in=225,out=350] (3.7,-0.3)  ;

    \draw [thick=1.5] (5.5,1) to[in=90,out=350] (6.3,0);
    \draw [thick=1.5] (5.5,1) to[in=45,out=170] (4.3,0.3);
    \draw [thick=1.5, ->-=.2]  (6.3,0)to[in=10,out=270] (5.5,-1);
    \draw [thick=1.5] (5.5,-1)to[in=315,out=190] (4.3,-0.3)  ;

    \draw [thick=1.5] (3.7,0.3) to (4.3,-0.3);
    \draw [thick=1.5] (3.7,-0.3) to (3.9,-0.1);
    \draw [thick=1.5] (4.1,0.1) to (4.3,0.3);

\draw[red,thick,dashed] (-33.5,-7.0) ellipse (2.0cm and 1.3cm);
\draw[violet,thick,dashed] (-14.5,-1.8) ellipse (1.3cm and 0.75cm);
\draw[brown,thick,dashed] (5.2,-8.0) ellipse (2.0cm and 1.3cm);

\node at (-32.4, -6.3) {\tiny $p$};
\node at (-33, -7.5) {\tiny $r$};
\node at (-34, -6.6) {\tiny $q$};
\node at (-34.5, -7.6) {\tiny $s$};

\node at (-13.5, -1.8) {\tiny $p$};
\node at (-14.4, -2.4) {\tiny $r$};
\node at (-14.4, -1.4) {\tiny $q$};
\node at (-15.3, -1.8) {\tiny $s$};

\node at (6.2, -8.4) {\tiny $p$};
\node at (4.9, -8.3) {\tiny $r$};
\node at (6.2, -7.4) {\tiny $q$};
\node at (4.9, -7.4) {\tiny $s$};

    \end{scope} 

    \end{scope}

\end{tikzpicture}
    \caption{The configuration of bands and different types of secondary crossings. At the intersection of two bands, the one induced by the arc with the smaller slope in Figure \ref{fig4} goes over the other. For a more detailed local picture around an unknot see Figure~\ref{fig6}.} 
    \label{fig5}

\end{figure}

Now that we have a standard Lagrangian projection of $L_\Gamma$, here are the crossings:
\begin{itemize}
\item a pair of crossings $c_a$ and $c_a^*$ between $U_{s(a)}$ and $U_{t(a)}$ for each arrow $a$ of $\Gamma$
	\item a self-crossing $\tau_v$ of $U_v$ for each vertex $v$ of $\Gamma$
	\item a self-crossing $\zeta_a$ of $U_{s(a)}$ near the right foot of the $1$-handle $h_a$, for
        each arrow $a$ in $\Gamma \backslash T$.
	\item additional crossings associated to nonplanar bands in the above construction of $L_\Gamma$ which go over/under each other 
\end{itemize}


\subsection{Computation of $\mathscr{B}_\Gamma = CE^*(L_\Gamma)$}\label{CE-comp}

The computations in this section are based on the simplification of the Chekanov-Eliashberg DG-algebra of a Legendrian link in $\#^k (S^1 \times S^2, \xi_{std})$ presented in Sec.~(\ref{simplify}), where the internal DG-algebra $\mathscr{I}_n$ is proved to be quasi-isomorphic to its finitely generated DG-subalgebra $\mathscr{A}_n$ as well as the $A_\infty$-algebra $\mathscr{K}_n$. 

The resolution of $L_\Gamma$ to be used is the one constructed in Sec.~(\ref{resolution}) with the
choice of orientation and base-points indicated in Fig.~(\ref{fig6}).
In this resolution, for every $1$-handle $h_a$ induced by an arrow $a \in \Gamma \setminus T$, there are exactly two strands going over $h_a$ which belong to the same component of $L_\Gamma$. Therefore the internal generators $c^0_{12;a}, c^1_{21;a} $ can be replaced by the generators of $\mathscr{I}_2^\K \simeq \mathbb{K}[z_a,z_a^{-1}]$  with grading $0$ and the rest of the internal generators associated to $h_a$ can be dropped from the presentation.

Except for $t_v^\pm$ and $z_a^\pm$, all the generators of the Chekanov-Eliashberg algebra $\mathscr{B}_\Gamma$ come from the crossings in the resolution. 
We use the same notation for these generators as the corresponding crossings, and list them as
follows (see Fig.~(\ref{fig6})). 
	\begin{figure}[htb!]
		\centering
		\begin{tikzpicture}
	    \tikzset{->-/.style={decoration={ markings,
                mark=at position #1 with {\arrow{>}}},postaction={decorate}}}

   \draw [thick] (-5,-4) circle (0.5); 
    \draw [thick] (6,-5) circle (0.5); 
 
            \begin{scope}[xshift=-6cm]
    \draw [thick=1.5] (1.5,1) to[in=45,out=170] (0.3,0.3);
    \draw [thick=1.5]  (1.5,-1)to[in=270,out=10] (2.3,0);
    \draw [thick=1.5] (1.5,1) to[in=140,out=350] (1.9,0.8);
    \draw [thick=1.5] (2.05,0.65) to[in=90,out=320] (2.3,0); 
    \draw [thick=1.5] (1.5,-1)to[in=315,out=190] (0.3,-0.3)  ;
            
    \draw [thick=1.5] (1.95,0.7) to[in=90, out=230] (1.55,0.2);        
     \draw [thick=1.5] (1.55,0.2) to[in=230, out=270] (2.1,0.5);        
           
    \draw[thick=1.5] (1.95,0.7) to[in=160, out=50] (4.6cm,1);             
    \draw[thick=1.5] (2.2,0.6) to[in=160, out=50] (4.4cm,0.92);        
    
    \node at (2.6,0.6) {\tiny $c^*_{a_1}$}; 
    \node at (1.9,1.2) {\tiny $c_{a_1}$};

            \end{scope}

    \begin{scope}[xshift=6cm]
   \draw [thick=1.5, ->-=.7] (-1.6,0.93) to[in=90,out=190] (-2.3,0);
    \draw [thick=1.5] (-1.4,1) to[in=135,out=10] (-0.3,0.3);
    \draw [thick=1.5] (-1.4,-1)to[in=225,out=350] (-0.05,-0.05)  ;
 
    \draw [thick=1.5] (-1.4,-1)to[in=270,out=170] (-2.3,0)  ;
    
    \end{scope}

    \begin{scope}
   \draw [thick=1.5, ->-=.7] (-1.6,0.93) to[in=90,out=190] (-2.3,0);
    \draw [thick=1.5] (-1.4,1) to[in=135,out=10] (-0.3,0.3);
    \draw [thick=1.5]  (-1.6,-0.93)to[in=270,out=170] (-2.3,0);
    \draw [thick=1.5] (-1.4,-1)to[in=225,out=350] (-0.05,-0.05)  ;

    \draw [thick=1.5] (1.5,1) to[in=45,out=170] (0.3,0.3);
  
    \draw [thick=1.5]  (1.5,-1)to[in=220,out=10] (1.9,-0.8);
    \draw [thick=1.5]  (2.05,-0.65)to[in=270,out=40] (2.3,0);
  
    \draw [thick=1.5] (1.5,1) to[in=140,out=350] (1.9,0.8);
    \draw [thick=1.5] (2.05,0.65) to[in=90,out=320] (2.3,0);

    \draw [thick=1.5] (1.5,-1)to[in=315,out=190] (0.3,-0.3)  ;

    \draw [thick=1.5] (-0.3,0.3) to (0.3,-0.3);
    \draw [thick=1.5] (0.05,0.05) to (0.3,0.3);

    \node at (2.6,0.6) {\tiny $c^*_{a_2}$}; 
    \node at (1.9,1.2) {\tiny $c_{a_2}$};

    \end{scope}    
  \draw [thick=1.5] (1.95,0.7) to[in=90, out=230] (1.55,0.2);        
     \draw [thick=1.5] (1.55,0.2) to[in=230, out=270] (2.1,0.5);        
           
    \draw[thick=1.5] (1.95,0.7) to[in=160, out=50] (4.6cm,1);             
    \draw[thick=1.5] (2.2,0.6) to[in=160, out=50] (4.4cm,0.92);

    \draw [thick=1.5,blue] (1.95,-0.7) to[in=90, out=120] (1.55,-0.2);        
    \draw [thick=1.5,blue] (1.55,-0.2) to[in=120, out=270] (1.75,-0.8);

    \draw [thick=1.5,blue] (-4.5,-3.9) to[in=240,out=0] (-1.6,-0.93); 
    \draw [thick=1.5, blue] (-4.5,-4.1) to[in=240,out=0] (-1.4,-1); 
 
    \draw [thick=1.5, blue]  (4.7,-5.1)to[in=300,out=180] (1.85,-1);
    \draw [thick=1.5, blue]  (4.7,-4.9)to[in=300,out=180] (1.95,-0.7);
 
    \draw [thick=1.5, blue]  (4.7,-5.1)to (5.5,-4.9);
    \draw [thick=1.5, blue]  (4.7,-4.9)to (5.5,-5.1);

    \node at (-0.3,-0.35) {\tiny $\bigstar$};
    \node at (-0.25, -0.7) {\tiny $t_v$};

    \node at (0,0.3) {\tiny $\tau_v$};
    
    \node at (-5,-4) {\tiny $h_{a_3}$}; 
    \node at  (6,-5) {\tiny $h_{a_4}$}; 
    
    \node at (2.5,-0.8) {\tiny $c_{a_4}$}; 
    \node at (1.7,-1.3) {\tiny $c^*_{a_4}$}; 

    \node at (5.3, -5.3) {\tiny $\zeta_{a_4}$}; 

    \node at (0, -5.5) {\small $a_1 : v \to w_1$, \ \ $a_2: w_2 \to v$}; 
    \node at (0, -6) {\small $a_3 : v \to w_3$, \ \ $a_4: w_4 \to v$}; 
    \node at (0, -6.5) {\small $\partial \tau_v = -t_v z_{a_3} (e_v + c_{a_1}^* c_{a_1}) + (e_v + c_{a_2}
    c_{a_2}^*) (e_v + c_{a_4} c_{a_4}^*)$ }; 
 \node at (0, -7.1) {\small $\partial \zeta_{a_4} = (e_{w_4} + c_{a_4}^* c_{a_4}) - z_{a_4}$  }; 

		\end{tikzpicture}
		\caption{A local picture showing the choice of attachments and the corresponding computation of the differential.}
		\label{fig6}
	\end{figure}
	
\begin{itemize}
\item a pair of generators $c_a$, $c_a^*$ with $|c_a|=|c_a^*|=0$ for each arrow $a$ of $\Gamma$,
	\item a generator $\tau_v$ with $|\tau_v|=-1$ and \footnote{Although our sign conventions for $CE^*$ follow \cite{EkNg}, as we must choose the null-cobordant spin structures on the components of $L_\Gamma$, we apply the substitution $t_v \mapsto -t_v$ to their formulas (see \cite[Rem2.7]{EkNg})}
	$$ \d \tau_v= \prod_{t(a)=v} (e_v+c_ac_a^*) -t_v \prod_{s(a)=v, \ a\notin T} z_a \prod_{s(a)=v, \ a\in T}  (e_v + c_a^*c_a) $$
	for each vertex $v$ of $\Gamma$, where $s(a)$ and $t(a)$ denote the source and target vertices of $a$, respectively,
	\item a generator $\zeta_a$ with $|\zeta_a|=-1$ and $$\d \zeta_a = e_v+c_a^*c_a-z_a $$ for each
        arrow $a$ in $\Gamma \backslash T$, where $v=s(a)$, 
	\item \emph{secondary generators} associated to crossings between nonplanar bands in the construction of $L_\Gamma$ 
\end{itemize}
We call the generators $t_v^\pm$, $z_a^\pm$ and those in the first three classes above \emph{primary
generators}. In the next lemma, we show that the secondary generators are really an artefact of the projection and could be removed. 

\begin{lem}\label{secondaries}
	The DG-subalgebra $\mathscr{P}_\Gamma$  generated by primary generators of $\mathscr{B}_\Gamma$ is quasi-isomorphic to $\mathscr{B}_\Gamma$.
\end{lem}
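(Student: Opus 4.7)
My plan follows the standard Gauss-elimination / homological perturbation strategy for semi-free DG-algebras: I will show that the secondary generators occur in canceling pairs that together contribute an acyclic summand, and then eliminate them iteratively to recover $\mathscr{P}_\Gamma$.

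Step 1 (pairing). The secondary crossings come in three types, as indicated by the dashed ellipses in Fig.~\ref{fig5}: bands weaving past each other en route to a common $1$-handle (violet), and bands crossing neighbouring bands or meridians at the two feet of a $1$-handle (red and brown). In each local model two Reeb chords appear simultaneously — one where a given band passes over and one where it passes under its partner — giving rise to a distinguished pair $(s, s')$ of secondary generators. By inspecting the small embedded bigon bounded by this pair in the resolved Lagrangian projection I would show that
\[ \partial s' \;=\; \pm s \cdot u_{s'} \;+\; r_{s'}, \]
where $u_{s'}$ is a product of idempotents in $\k\langle \t^{\pm}\rangle$ (in particular invertible on the relevant component), and $r_{s'}$ is a $\k\langle \t^{\pm} \rangle$-linear combination of strictly longer words that involve only primary generators and secondary generators lying ``further along''. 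Identifying the grading-shift $|s'| = |s| + 1$ is automatic from the orientation conventions in Fig.~\ref{fig6}.

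Step 2 (iterative elimination). I would impose a total order on the set of secondary pairs, first by the $1$-handle (or meridian) at which they are located and then by the vertical position of the band within the band-bundle entering that handle. With this ordering, at every stage the leading pair $(s_j, s'_j)$ satisfies $\partial s'_j = \pm s_j u_{s'_j} + r_j$ with $r_j$ involving only primary generators and later secondary generators. The standard Gauss-elimination principle for semi-free DG-algebras (the same one that proves invariance of $CE^*$ under Reidemeister~II and stabilisation) then yields a quasi-isomorphic DG-subalgebra obtained by discarding $\{s_j, s'_j\}$ and substituting $s_j \mapsto \mp u_{s'_j}^{-1} r_j$ throughout the remaining differentials. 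Iterating produces a DG-algebra freely generated over $\k \langle \t^{\pm} \rangle$ by the primary generators alone.

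Step 3 (identification). Finally, I would observe that the differentials of the primary generators $\tau_v$ and $\zeta_a$ written out at the end of Subsection~\ref{CE-comp} (and locally in Fig.~\ref{fig6}) already involve only primary generators, so the successive substitutions in Step~2 do not modify them. The output of the elimination is therefore precisely $\mathscr{P}_\Gamma$ with the claimed differential, proving the lemma.

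The main obstacle I expect is the local verification of the ``leading bigon'' identity in Step~1: I need to rule out additional short bigons that could spoil the form $\partial s' = \pm s\, u_{s'} + (\text{longer})$, and in particular control the contribution of base-points $*_v$ and of internal chords of adjacent $1$-handles crossing through the local model. Once this local analysis is carried out, Steps~2 and~3 are formal consequences of homological perturbation theory and direct inspection of Fig.~\ref{fig6}.
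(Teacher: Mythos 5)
Your overall strategy (eliminate the secondary generators by cancellation, leaving the subalgebra on primary generators) is the same as the paper's, but the local analysis in your Step 1 --- which you yourself flag as the main obstacle --- is where the actual content lies, and as stated it is wrong in two ways. First, the secondary crossings do not come in over/under pairs: in the construction one band passes entirely under the other (the convention is that the band of smaller slope goes under), and since each band consists of two parallel strands, each intersection point of arcs in the standard form of $\Gamma$ produces a \emph{quadruple} $p,q,r,s$ of secondary generators, whose differentials form a square (e.g.\ $\d p = z_a q + r z_{a'}$, $\d q = s z_{a'}$, $\d r = -z_a s$, $\d s=0$), not independent two-term complexes. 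Second, the leading coefficients are not products of idempotents. When the relevant arcs end at the left feet of the $1$-handles the coefficient is the internal generator $z_a$, which is at least invertible; but when an arc ends at the right foot the coefficient is $e_i + c_a c_a^*$, which is \emph{not} invertible in $\mathscr{B}_\Gamma$, so your Gauss-elimination step ``substitute $s_j \mapsto \mp u_{s'_j}^{-1} r_j$'' cannot be carried out as written. The paper handles exactly this point by first applying elementary automorphisms built from the degree $-1$ generators $\zeta_a$ (whose differential is $e_v + c_a^*c_a - z_a$) to trade the non-invertible coefficients for $z_a$'s, reducing all three configurations (both arcs on the left, both on the right, mixed) to the invertible case, and only then destabilising; it also notes the technical point that tameness is applied in an extended sense, with all automorphisms equal to the identity on the non-free generators $t_v^{\pm}, z_a^{\pm}$.

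There is a second gap: your Step 3 assumes that the differentials of the primary generators, and of secondary generators attached to other intersection points, involve no secondary generators, but this is precisely what must be checked for the elimination to leave $\mathscr{P}_\Gamma$ untouched (and for $\mathscr{P}_\Gamma$ to be a DG-subalgebra at all). The paper proves it by a sign/slope count: at a corner of a disk the quadrant is negative exactly when the slope decreases under clockwise traversal, so any polygon all of whose corners are band--band crossings has at least two positive corners and contributes nothing; combined with the placement of the bands this shows no secondary generator occurs in any other differential. Without this argument, and without the corrected local normal form above, the proposal does not yet constitute a proof.
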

\begin{proof}
We will prove that $\mathscr{B}_\Gamma$ and $\mathscr{P}_\Gamma$ by showing that the secondary generators can be destabilised (after suitable elementary automorphisms) without effecting the rest of $\mathscr{B}_\Gamma$. 
Technically, such a stable tame isomorphism argument requires the DG-algebra to be semi-free, but this does not become an issue in the current set-up since we will only use elementary automorphisms which are identity on the non-free generators $t_v^\pm$ and $z_a^\pm$.

First of all, note that every intersection point of the arcs in the lower half of the standard form
    of $\Gamma$ (blue arcs in Fig.~(\ref{fig4})) induces a set of $4$ generators which correspond to the crossings between the associated bands in the resolution of $L_\Gamma$. 
We always assume that the band with the smaller slope goes under the other one.
A consequence of this is that a polygon whose all vertices are induced by intersection of arcs (in the standard form of $\Gamma$) has at least $2$ (Reeb-)positive vertices, hence do not contribute to the differential in $\mathscr{B}_\Gamma$. 
This is because at a vertex, the sign (of the quadrant in the interior of the polygon) is negative if and only if the slope decreases while traversing the polygon in the clockwise direction. 
Moreover, no secondary generator appears in the differential of a secondary generator induced by another intersection or a primary generator. 

In the rest of the proof we explain how to destabilise the secondary generators explicitly.
The generators induced by the same intersection point in the standard form of $\Gamma$ are destabilised simultaneously. 
This process varies slightly depending on whether the intersecting arcs end on the left or on the right of the diagram.
Examples of different cases are indicated in Fig.~(\ref{fig5}).
In the arguments below, the arrows in play are $a, a' \in \Gamma \setminus T$ from $i,i'$ to $j,j'$, resp., and the generators are denoted by $p,q,r,s$.

{\emph {Case 1: Both arcs end on the left:}} 
The differentials of the generators are
	$$ \d p = z_aq + rz_{a'} ,\ \ \d q=sz_{a'},  \ \ \d r= -z_as , \ \ \d s=0$$ 
Once we apply the following elementary automorphisms successively
$$ s \mapsto z_a^{-1} s , \ \ q \mapsto q-z_a^{-1} rz_{a'} , \ \  q \mapsto z^{-1}_aq$$ 	
the differentials become
$$ 	\d p = q , \ \ \d q=0, \ \ \d r= -s , \ \ \d s=0$$
and $p,q,r,s$ can be destabilised.

{\emph {Case 2: Both arcs on the right:}}
In this case, the 
differentials are
	$$ \d p = (e_i+c_a^*c_a)q + r(e_{i'}+c_{a'}^*c_{a'}) \ , \ \ \d q=s(e_{i'}+c_{a'}^*c_{a'}) \ ,  \ \ \d r= -(e_i+c_a^*c_a)s \ , \ \ \d s=0$$ 
	We apply the elementary automorphisms
	$$ p \mapsto p +\zeta_aq+r\zeta_{a'}+\zeta_a s\zeta_{a'}, \ \ q \mapsto q+ s\zeta_{a'} \ \ r \mapsto r-\zeta_as $$
	successively, making the differentials identical to those considered in the first case.
	
	{\emph {Case 3: The arcs end on different sides:}}
	The differentials are
	$$ \d p = z_{a}q+ r(e_{i'}+c_{a'}^*c_{a'}) \ , \ \ \d q= s(e_{i'}+c_{a'}^*c_{a'}) \ , \ \ \d r=z_{a}s \ , \ \ \d s=0$$
	In order to reduce to the first case, it suffices to apply the following elementary automorphisms successively
	$$ p \mapsto p +r\zeta_{a'} , \ \ q \mapsto q+s\zeta_{a'} $$
\end{proof}

\begin{thm}\label{derived}
	Let $\Gamma$ be a finite graph. The Chekanov-Eliashberg algebra $\mathscr{B}_\Gamma$ of the
Legendrian link $L_\Gamma$ and the derived multiplicative preprojective algebra $\mathscr{L}_\Gamma$
associated to $\Gamma$ are quasi-isomorphic DG-algebras. 
\end{thm}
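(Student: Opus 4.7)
The plan is to construct an explicit DG-algebra isomorphism $\Phi \colon \mathscr{L}_\Gamma \to \mathscr{P}_\Gamma$, where $\mathscr{P}_\Gamma$ is the subalgebra of $\mathscr{B}_\Gamma$ generated by the primary generators; by Lemma~\ref{secondaries} this will suffice. Comparing the two presentations, both algebras have the same graded set of generators, namely $c_a, c_a^*, z_a^{\pm}, t_v^{\pm}, \zeta_a$, and $\tau_v$ with matching degrees, and their differentials agree on every generator except $\tau_v$. The sole discrepancy in $\partial \tau_v$ consists in replacing the factor $(e_v + c_a^*c_a)$ by $z_a$ inside $\prod_{s(a)=v}$ for each $a \in \Gamma \setminus T$.

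The key observation is that the relation $\partial \zeta_a = (e_v + c_a^*c_a) - z_a$ for $a \in \Gamma \setminus T$ exhibits these two factors as cohomologous in $\mathscr{P}_\Gamma$. Writing $b_1 < \cdots < b_k$ for the arrows of $\Gamma \setminus T$ with source $v$ in the chosen order, and setting $X_j = e_v + c_{b_j}^*c_{b_j}$, $Y_j = z_{b_j}$, and letting $R$ denote the right-tail of the product corresponding to the $T$-edges with source $v$, the non-commutative telescoping identity
\begin{equation*}
X_k \cdots X_1 - Y_k \cdots Y_1 = \sum_{j=1}^{k} X_k \cdots X_{j+1}(X_j - Y_j)Y_{j-1}\cdots Y_1
\end{equation*}
together with the fact that each $X_j$, $Y_j$, and $R$ is a $\partial$-cycle (since the $c_a$'s, $z_a$'s, and $t_v$'s are all closed of degree $0$) shows that each summand equals $\partial\bigl(X_k \cdots X_{j+1}\,\zeta_{b_j}\,Y_{j-1}\cdots Y_1\,R\bigr)$. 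Summing over $j$ and multiplying on the left by $t_v$ produces an explicit degree $-1$ element $t_v \eta_v \in \mathscr{P}_\Gamma$ satisfying $\partial(t_v \eta_v) = d_{\mathscr{P}}(\tau_v) - d_{\mathscr{L}}(\tau_v)$.

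Accordingly, define $\Phi$ to be the identity on all generators other than $\tau_v$, and set $\Phi(\tau_v) = \tau_v - t_v\eta_v$. This is a triangular change of generators, hence an isomorphism of graded $\k\langle \t^\pm \rangle$-algebras, and compatibility with differentials reduces to the identity just derived (it is automatic on all other generators, since $\tau_v$ does not appear in their differentials). Combined with Lemma~\ref{secondaries}, this yields the desired quasi-isomorphism $\mathscr{L}_\Gamma \simeq \mathscr{B}_\Gamma$. The only nontrivial aspect of the argument is bookkeeping: one must verify that the ordering conventions ($T$-edges precede non-$T$-edges in the right-to-left product) and the graded Leibniz sign conventions line up precisely so that the telescoping identity takes exactly this form. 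Beyond this, no further geometric or analytic input is required.
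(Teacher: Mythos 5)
Your proposal is correct and follows essentially the same route as the paper: reduce to $\mathscr{P}_\Gamma$ via Lemma~\ref{secondaries}, then exhibit an explicit DG-algebra isomorphism that is the identity on all primary generators except $\tau_v$, which is corrected by a degree $-1$ element built from the $\zeta_a$'s via the non-commutative telescoping identity. The only differences are cosmetic — you go from $\mathscr{L}_\Gamma$ to $\mathscr{P}_\Gamma$ rather than the reverse, and you telescope with the $(e_v+c_a^*c_a)$-factors on the left of $\zeta_{b_j}$ instead of the $z_a$-factors — neither of which affects the argument.
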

\begin{proof}
By the previous lemma, it suffices to find a quasi-isomorphism between $\mathscr{P}_\Gamma$ and $\mathscr{L}_\Gamma$.
In fact, there is a DG-algebra isomorphism $\varphi : \mathscr{P}_\Gamma \to \mathscr{L}_\Gamma$ defined on the primary generators as identity except for 
$$\tau_v \mapsto \tau_v  + t_v  \left( \sum_{i=1}^{\iota_v} z_{a_1}  \cdots z_{a_{i-1}}\zeta_{a_i} (e_v + c_{a_{i+1}}c_{a^*_{i+1}})\cdots (e_v + c_{a_{\iota_v}}c_{a^*_{\iota_v}})\right) \prod_{s(a)=v, \ a\in T} (e_v + c_a^*c_a)$$
where $\{ a_1, \ldots , a_{\iota_v}\} $ is the (ordered) set of arrows with $s(a_i)=v$ and $a_i\in \Gamma \setminus T$.
\end{proof}

\section{Plumbings of cotangent bundles of surfaces of positive genus}
\label{g>0} 

In this section, we generalise the construction and the computation in the previous section to plumbings of cotangent bundles of surfaces of arbitrary genus.

\subsection{A Legendrian surgery presentation}

In Fig.~(\ref{fig7}), we give an example illustrating how to draw a Legendrian surgery picture of a
plumbing of cotangent bundles of surfaces of varying genus according to an arbitrary plumbing graph. 
\begin{figure}[h!]
    \centering
    \begin{tikzpicture}[scale=1,auto=left,every node/.style={circle}]

    \tikzset{->-/.style={decoration={ markings,
                mark=at position #1 with {\arrow{>}}},postaction={decorate}}}

 \draw [thick] (-2,-2) circle (0.5); 
    \draw [thick] (-2,-3.5) circle (0.5); 
    \draw [thick] (-2,-5) circle (0.5); 
    \draw [thick] (-2,-6.5) circle (0.5); 
    \draw [thick] (-2,-8) circle (0.5); 

    \draw [thick] (13.5,-2) circle (0.5); 
    \draw [thick] (13.5,-3.5) circle (0.5); 
    \draw [thick] (13.5,-5) circle (0.5); 
    \draw [thick] (13.5,-6.5) circle (0.5); 
    \draw [thick] (13.5,-8) circle (0.5);

    \draw [blue] [yshift=-3](1,-2) to[in=200,out=0] (5,1.7); 
    \draw [blue] [yshift=-3](-1.5,-2) to[in=180,out=0] (1,-2); 
    
    \draw [blue] [yshift=3](1,-2) to[in=200,out=0] (5,1.7); 
    \draw [blue] [yshift=3](-1.5,-2) to[in=180,out=0] (1,-2); 
    
    \draw [blue][xshift=-3](12.02,-2.1) to[in=330,out=195] (10.6,1.7); 
    \draw [blue][xshift=3](11.9,-1.9) to[in=350,out=165] (10.4,1.8); 
   
    \draw [blue]  [yshift=3](12,-2) to [yshift=-6](13,-2);
    \draw [blue]  [yshift=-3](11.9,-2) to [yshift=6](13,-2);

    \draw [red] [xshift=-3](-1,-3)[in=200,out=40] to (1.1,1.5); 
    \draw [red] [xshift=3](-1,-3)[in=200,out=40] to (0.9,1.3);  

    \draw [red] [xshift=-3](-1,-3) to[out=220,in=20] (-1.49, -3.2); 
    \draw [red] [xshift=3](-1,-3) to[out=220,in=20] (-1.63, -3.3);

    \draw [red] [xshift=-3](6,-2)[in=350,out=150] to (3.1,1.3); 
    \draw [red] [xshift=3](6,-2)[in=350,out=150] to (2.9,1.5);  

    \draw [red] [xshift=-3](6,-2) [out=330,in=185] to (12,-3.6); 
    \draw [red] [xshift=3](6,-2) [out=330,in=175]  to (11.9,-3.4); 

    \draw [red]  [yshift=3](12,-3.5) to [yshift=-6](13,-3.5);
    \draw [red]  [yshift=-3](11.9,-3.5) to [yshift=6](13,-3.5);

    \draw [red] [xshift=-3](-0.5,-4.5)[in=200,out=40] to (1.1,0.5); 
    \draw [red] [xshift=3](-0.5,-4.5)[in=200,out=40] to (0.9,0.3);  

    \draw [red] [xshift=-3](-0.5,-4.5) to[out=220,in=20] (-1.45, -4.8); 
    \draw [red] [xshift=3](-0.5,-4.5) to[out=220,in=20] (-1.62, -4.93);

    \draw [red] [xshift=-3](6,-4.5)[in=350,out=150] to (3.1,0.3); 
    \draw [red] [xshift=3](6,-4.5)[in=350,out=150] to (2.9,0.5);  

    \draw [red] [xshift=-3](6,-4.5) [out=330,in=185] to (12,-5.1); 
    \draw [red] [xshift=3](6,-4.5) [out=330,in=175]  to (11.9,-4.9);

    \draw [red]  [yshift=3](12,-5) to [yshift=-6](13,-5);
    \draw [red]  [yshift=-3](11.9,-5) to [yshift=6](13,-5);

     \draw [red] [xshift=-3](1.6,-5.5)[in=200,out=40] to (5.1,1.2); 
    \draw [red] [xshift=3](1.6,-5.5)[in=200,out=40] to (4.9,1);  

    \draw [red] [xshift=-3](1.6,-5.5) to[out=220,in=20] (-1.45, -6.3); 
    \draw [red] [xshift=3](1.6,-5.5) to[out=220,in=20] (-1.62, -6.43);

    \draw [red] [xshift=-3](10,-6.5)[in=350,out=170] to (7.1,1); 
    \draw [red] [xshift=3](10,-6.3)[in=350,out=170] to (6.9,1.2);  

    \draw [red] [xshift=-3](10,-6.5) [out=350,in=185] to (12,-6.6); 
    \draw [red] [xshift=3](10,-6.3) [out=350,in=175]  to (11.9,-6.4); 

    \draw [red]  [yshift=3](12,-6.5) to [yshift=-6](13,-6.5);
    \draw [red]  [yshift=-3](11.9,-6.5) to [yshift=6](13,-6.5);

     \draw [red] [xshift=-3](2,-7)[in=200,out=40] to (5.1,0.4); 
    \draw [red] [xshift=3](2,-7)[in=200,out=40] to (4.9,0.2);  

    \draw [red] [xshift=-3](2,-7) to[out=220,in=20] (-1.45, -7.8); 
    \draw [red] [xshift=3](2,-7) to[out=220,in=20] (-1.62, -7.93);

    \draw [red] [xshift=-3](8.5,-7)[in=350,out=150] to (7.1,0.3); 
    \draw [red] [xshift=3](8.5,-7)[in=350,out=150] to (6.9,0.5);  

    \draw [red] [xshift=-3](8.5,-7) [out=330,in=185] to (12,-8.1); 
    \draw [red] [xshift=3](8.5,-7) [out=330,in=175]  to (11.9,-7.9); 

    \draw [red]  [yshift=3](12,-8) to [yshift=-6](13,-8);
    \draw [red]  [yshift=-3](11.9,-8) to [yshift=6](13,-8);

    \draw (1.5,2)[in=90,out=90] to (9.5,2);
    \draw (1.7,2)[in=90,out=90] to (9.3,2);
 
    \draw (2.5,2)[in=90,out=90] to (6,2);
    \draw (2.7,2)[in=90,out=90] to (5.8,2);

\draw[dashed]  (1,0) -- (3,0);
\draw[dashed]  (5,0) -- (7,0);
\draw[dashed]  (8.5,0) -- (10.5,0);

\draw[dashed]  (1,2) -- (3,2);
\draw[dashed]  (5,2) -- (7,2);
\draw[dashed]  (8.5,2) -- (10.5,2);

\draw[dashed]  (1,0) -- (1,2);
\draw[dashed]  (5,0) -- (5,2);
\draw[dashed]  (8.5,0) -- (8.5,2);

\draw[dashed] (3,0) -- (3,2);
\draw [dashed] (7,0) -- (7,2);
\draw [dashed] (10.5,0) -- (10.5,2);

   \node (n1) at (10,4) {$T^2$};
       \node (n2) at (11,5)  {$T^2$};
  \node (n3) at (12,4)  {$S^2$};

  \foreach \to/\from in {n2/n1,n3/n1}
    \draw (\from) -- (\to);
    
  \draw (n2) -- (n3); 
  
\node at (10.3,4.55)  {\tiny{$a_1$}};
\node at (11.75,4.55)  {\tiny{$a_2$}};
\node at (11,3.85)  {\tiny{$a_3$}};
  
\node at (4.55,3.25)  {\tiny{$a_1$}};
\node at (5.75,4.55)  {\tiny{$a_2$}};
\node at (11.5,0.85)  {\tiny{$a_3$}};

    \end{tikzpicture}
    \caption{Surgery diagram for plumbings of cotangent bundles of surfaces of varying genus.
    The drawn example shows a plumbing of two copies of $T^*T^2$ with one copy of $T^*S^2$. } 
    \label{fig7}

\end{figure}
Inside the boxes, we put a standard surgery picture of the cotangent bundle of a genus $g_v$
surface. This is described in Fig.~(\ref{thebeast}). It is obtained by band summing the 2-handles of $g_v$
copies of the standard picture $T^*T^2$ given in Fig.~(\ref{gompf}) and then applying simplifying
Reidemeister moves.  There is a freedom in choosing the location of the band sum. Our choice makes
it easier to see directly that the resulting DG-algebra is quasi-isomorphic to the derived version of
the higher genus multiplicative preprojective algebra.

\begin{figure}[h!]
    \centering
    \begin{tikzpicture}[scale=1,auto=left,every node/.style={circle}]
   \tikzset{->-/.style={decoration={ markings, mark=at position #1 with {\arrow{>}}},postaction={decorate}}}

     \draw [thick] (-2,0) circle (0.5); 
    \draw [thick] (-2,3) circle (0.5); 
    \draw [thick] (4,0) circle (0.5); 
    \draw [thick] (4,3) circle (0.5);  

        \draw (-0.5,3.3) to[in=180,out=0] (3.6,0.3); 
        \draw (-1.6,3.3) to (-0.8,3.3); 
        \draw (-1.6,2.7) to (-0.1,2.7); 
        \draw (-0.1,2.7) to[out=180,in=0] (-1.1,3.6); 
       \draw (-1.1,3.6) to (1.5,3.6); 
       \draw (1.5,3.6) to [out=180,in=45] (0.8,2.9); 
       \draw (0.6,2.6) to [out=225,in=0] (-1.6,0.3); 

       \draw (3.6,-0.3) to [in=0, out=180] (0.7,1.2);
       \draw (0.7, 1.2) to [in=225, out=0] (1.5,1.6);
       \draw (1.7,1.8) to [in=180, out=45] (3.6,3.3);

       \draw (-1.6,-0.3) to [in=225, out=0] (1.6,0.8);
        \draw (1.75,0.95) to[in=225,out=45] (1.9,1.1); 
       \draw (2.1,1.3) to[in=180,out=45] (3.6,2.7); 

    \end{tikzpicture}
    \caption{Legendrian surgery picture of $T^*T^2$}
    \label{gompf} 
\end{figure}

\subsection{Computation of the Chekanov-Eliashberg DG-algebra}

As shown in Lem.~(\ref{secondaries}) the secondary generators induced by intersecting blue curves
as in
Fig.~(\ref{fig5}) can be destabilised without effecting the rest of the DG-algebra
$\mathscr{B}_{\Gamma, {\mathbf g}}$. In the positive genus case, we have a new set of secondary
generators induced by intersecting red and blue curves as in Fig.~(\ref{fig7}) which can be destabilised exactly as before.
Another set of secondary generators  $p_{v,k},q_{v,k},r_{v,k}$ and $s_{v,k}$, for $k=1, \dots,
g_v-1$, indicated by quadruples $(p_i,q_i,r_i,s_i)$ for $i=1,2, \ldots,g-1$ in Fig.~(\ref{thebeast}),
have a configuration and destabilisation similar to those treated in the first case of the proof of Lem.~(\ref{secondaries}).
By these initial simplifications, $\mathscr{B}_{\Gamma, {\mathbf g}}$ is quasi-isomorphic to its DG-subalgebra $\mathscr{P}_{\Gamma, {\mathbf g}}$ generated by the primary generators below:
\begin{itemize}
	\item generators $t_v^\pm$ for each vertex $v$,
	\item generators $\zeta_a$, $z_a^\pm$ for each arrow $a$ in $\Gamma \backslash T$, 
	\item generators $c_a$, $c_a^*$ for each arrow $a$ in $\Gamma$,
	\item generators $\tau_v$, $z^\pm_{v,i}$, $\xi_{v,j}$, $x_{v,j}$ for each vertex $v$, $i=1, \dots , 2g_v$, and $j=1,\dots , 4g_v$
\end{itemize}
For the last group of generators above, see Fig.~(\ref{thebeast}) where the notation is simplified by using $x_i$ instead of $x_{v,i}$, etc. 
Note that $z_{v,i}^\pm$ correspond to the generators of the internal DG-algebra $\mathscr{I}_2 ^\K \simeq \mathbb{K}[z^\pm]$ of the $i^{th}$ $1$-handle $h_i$ in the diagram of the cotangent bundle of the surface of genus $g_v$ in Fig.~(\ref{thebeast}).

As in the genus $0$ case, the DG-algebra $\mathscr{P}_{\Gamma, {\mathbf g}}$ of primary generators is supported in nonpositive gradings with $\tau_v$, $\zeta_a$ and $\xi_{v,j}$ in grading $-1$ and the rest of the generators in grading $0$. 
Note that although the generators $\xi_{v,4i-2}, \xi_{v,4i-1}$, for $i = 1, \dots , g_v$, are induced by the twists near the right feet of the corresponding $1$-handles similar to the generators of type $\zeta_a$, they can be destabilised and will not appear in the ultimate presentation of $\mathscr{B}_{\Gamma, {\mathbf g}}$.

Before we show how to destabilise the generators $\xi_{v,j}$ and $x_{v,j}$, let us describe the differential on $\mathscr{P}_{\Gamma, {\mathbf g}}$ as given by Fig.~(\ref{thebeast}) where $z_i=z_{v,i}, \xi_j=\xi_{v,j}, x_j=x_{v,j}$, $g=g_v$:
\begin{align*}
 \d \tau_v&= \prod_{t(a)=v} (e_v+c_ac_a^*) -t_v x_1\prod_{s(a)=v, \ a\notin T} z_a \prod_{s(a)=v, \ a\in T}  (e_v + c_a^*c_a) \\
\d \zeta_a &= e_{s(a)}+c_a^*c_a-z_a \\
\d \xi_{4k-3} &= -z_{2k}x_{4k-3}+x_{4k-2}x_{4k-1}x_{4k}, \ \text{for} \ k=1,\dots, g-1 \\ 
\d \xi_{4k-2}&=x_{4k-2}-z_{2k-1}, \ \text{for} \ k=1,\dots, g \\ 
\d \xi_{4k-1}&=x_{4k-1}-z_{2k}, \ \text{for} \ k=1,\dots, g \\ 
\d \xi_{4k}&=e_v-x_{4k+1}z_{2k-1}x_{4k}, \ \text{for} \ k=1,\dots, g-2 \\ 
\d \xi_{4g-3} &= -z_{2g}x_{4g-3}+x_{4g-2}x_{4g-1} \\ 
\d \xi_{4g-4}&=e_v-x_{4g-3}x_{4g}z_{2g-3}x_{4g-4}\\
\d \xi_{4g}&=e_v-z_{2g-1}x_{4g}
\end{align*}

\begin{figure}[htb!]
    \centering
    \begin{tikzpicture}[scale=0.99,auto=left,every node/.style={circle}]
   \tikzset{->-/.style={decoration={ markings,
                mark=at position #1 with {\arrow{>}}},postaction={decorate}}}

    \begin{scope}[scale=0.6, xshift=8cm, yshift=4cm]           
    
   \draw   (-2.3,0.4)to[in=0, out=150] (-6.4,1.1);
   \draw  (-2.4,0.1)to[in=0, out=150] (-6.4,0.8);

    \draw (-1.7,1.1) to (-1.7,1.6); 
    \draw (-1.4,1.3) to (-1.4,1.6); 
    
    \draw[green!50!black]  (-1.7,1.1) to[in=330,out=270] (-2.3,0.4);
   
    \draw[green!50!black]  (-1.4,1.3) to[in=135,out=270] (-0.3,0.3);
   \draw[green!50!black]  (-1.5,-2)to[in=330,out=130] (-2.4,0.1);
    \draw[->-=.5, green!50!black] (-1.4,-1.5)to[in=225,out=45] (-0.1,-0.1);
 
    \draw[green!50!black]  (1.5,-1)to[in=245,out=10] (2.2,-0.4);
    \draw[green!50!black]   (2.25,-0.25)to[in=270,out=40] (2.3,0);
 
    \draw[green!50!black]  (1.5,1) to[in=45,out=170] (0.3,0.3);
    \draw[green!50!black]  (1.5,1) to[in=140,out=350] (1.9,0.8);
   \draw[green!50!black]  (2.05,0.65) to[in=90,out=320] (2.3,0);

    \draw[green!50!black]  (1.5,-1)to[in=315,out=190] (0.3,-0.3)  ;

    \draw[green!50!black]  (-0.3,0.3) to (0.3,-0.3);
    \draw[green!50!black]  (0.1,0.1) to (0.3,0.3);

    \draw (2.3,1.7) to[in=150,out=270] (2.1, 0.2); 
    \draw (2.5,1.7) to[in=70,out=270] (2.4, 0.2); 

    \draw (3,-0.1) to[in=150,out=210] (1.9, -0.6); 
    \draw (3,-0.1) to[in=180,out=30] (7,0.5); 
    
        \draw (2.2, -0.6) to [in=180,out=0] (7,0);

    \node at (0,0.3) {\tiny{$\tau_v$}}; 
    \node at (-0.3,-0.35) {\tiny $\bigstar$};

        \node at (-2.1, -1.7) {\tiny{$x_1$}};

    \end{scope}
            
    \begin{scope}[scale=0.6, xshift=8cm, yshift=-4cm]

    \draw  (1.5,-1)to[in=270,out=10] (2.3,0);
    \draw  (1.5,1) to[in=45,out=170] (0.3,0.3);
    \draw  (1.5,1) to[in=90,out=350] (2.3,0);

    \draw  (1.5,-1)to[in=315,out=190] (0.3,-0.3)  ;

    \draw  (-0.3,0.5) to[in=135,out=280] (0.3,-0.3);
    \draw  (0.1,0.1) to (0.3,0.3);

    \draw (-1.55,-2.7)to[in=225,out=30] (-0.1,-0.1);

    \node at (-0.5,0.1) {\tiny{$\xi_4$}};

    \end{scope}

    \begin{scope}[scale=0.6, xshift=9.1cm, yshift=-12.5cm]

    \draw  (1.5,-1)to[in=270,out=10] (2.3,0);
    \draw  (1.5,1) to[in=45,out=170] (0.3,0.3);
    \draw  (1.5,1) to[in=90,out=350] (2.3,0);

    \draw  (1.5,-1)to[in=315,out=190] (0.3,-0.3)  ;

    \draw  (-0.3,0.5) to[in=135,out=315] (0.3,-0.3);
    \draw  (0.2,0.2) to (0.3,0.3);
   
    \draw (-1.9,-2.3)to[in=225,out=30] (-0.1,-0.1);

    \node at (-0.8,-0.0) {\tiny{$\xi_{4g-8}$}};

    \end{scope}

    \begin{scope}[scale=0.6, xshift=9cm, yshift=-22cm]

    \draw  (1.5,-1)to[in=270,out=10] (2.3,0);
    \draw  (1.5,1) to[in=45,out=170] (0.3,0.3);
    \draw  (1.5,1) to[in=90,out=350] (2.3,0);

    \draw  (1.5,-1)to[in=315,out=190] (0.3,-0.3)  ;

    \draw  (0.2,0.2) to (0.3,0.3);

    \draw (0.3,-0.3) to [in=290,out=135] (-1.14,2.5); 

    \node at (-0.8,-0.0) {\tiny{$\xi_{4g-4}$}};
 
    \end{scope}

   \begin{scope}[scale=0.4, xshift=8.5cm, yshift=-36cm]

   \draw  (1.5,-1)to[in=270,out=10] (2.3,0);
   \draw  (1.5,1) to[in=0,out=170] (-0.3,1.2);
   \draw  (1.5,1) to[in=90,out=350] (2.3,0);

   \draw  (1.5,-1)to[in=315,out=190] (0.3,-0.3)  ;
   \draw  (-0.3,0.5) to[in=135,out=300] (0.3,-0.3);

   \end{scope}

\draw (3.9,1.2) to[in=100,out=300] (4.62,-2.1);

\node at (4.2, 0.1) {\tiny{$\xi_1$}}; 
\node at (4.75, -0.6) {\tiny{$x_4$}}; 

\node at (3.5, -4.0) {\tiny{$x_5$}}; 
\node at (3.8, -5.0) {\tiny{$\xi_5$}}; 

\node at (4.7, -5.6) {\tiny{$x_8$}};

\node at (2.95, -0.25)  {\tiny{$p_1$}}; 
\node at (2.4, 0.55) {\tiny{$q_1$}}; 
\node at (2.1, -0.6) {\tiny{$r_1$}}; 
\node at (1.8, 0.25) {\tiny{$s_1$}}; 

\node at (2.95, -5.25)  {\tiny{$p_2$}}; 
\node at (2.4, -4.45) {\tiny{$q_2$}}; 
\node at (2.1, -5.6) {\tiny{$r_2$}}; 
\node at (1.8, -4.75) {\tiny{$s_2$}};

\node at (3.1, -10.25)  {\tiny{$p_{g-1}$}}; 
\node at (2.6, -9.4) {\tiny{$q_{g-1}$}}; 
\node at (2.05, -10.8) {\tiny{$r_{g-1}$}}; 
\node at (1.6, -9.75) {\tiny{$s_{g-1}$}};

\draw[yshift=-9.6cm] (3.9,1.2) to[in=110,out=300] (4.72,-2.1);

\draw(1, -0.3) to [in=205, out=0] (1.9, 0.1); 
\draw(2.1, 0.2) to (2.27, 0.28); 
\draw(1, -0.8) to [in=205,out=0] (2.1, -0.4); 
\draw(2.3, -0.3) to (2.47, -0.22);

\draw (3.5, -0.75) to [in=0, out=180] (1,1.7); 
\draw (2.5, -1) to [in=0, out=120] (1,1.2); 
\draw (3.5,-0.75) to [in=180, out=0] (4.4,-0.75);
\draw (2.5,-1) to [in=120, out=300] (4.58,-5.9); 

\draw (4.1, 0.3) to [in=25,out=210] (2.7, -0.15);
\draw (3.65, 1.2) to [in=25,out=225] (2.5, 0.35);

\draw (6.5, -0.75) to (4.6,-0.75);

\draw (7.8, 1.7) to [in=30,out=150] (4.4, 0.45);

\draw [yshift=-5cm] (1, -0.3) to [in=205, out=0] (1.9, 0.1); 
\draw [yshift=-5cm] (2.1, 0.2) to (2.27, 0.28); 
\draw [yshift=-5cm] (1, -0.8) to [in=205, out=0] (2.1, -0.4); 
\draw [yshift=-5cm] (2.3, -0.3) to (2.47, -0.22);

\draw  [yshift=-5cm] (3.5, -0.75) to [in=0, out=180] (1,1.7); 
\draw  [yshift=-5cm] (2.5, -1) to [in=0, out=120] (1,1.2); 
\draw [yshift=-5cm] (3.5,-0.75) to [in=180, out=0] (4.4,-0.75);
\draw [yshift=-5cm] (3.8, 0.3) to [in=25,out=210] (2.7, -0.15);
\draw [yshift=-5cm] (3.65, 0.9) to [in=25,out=210] (2.5, 0.35);

\draw [yshift=-5cm] (6.5, -0.75) to (4.6,-0.75);

\draw [yshift=-5cm] (7.8, 1.7) to [in=30,out=150] (4.15, 0.45);

\draw [yshift=-10cm] (1, -0.3) to [in=205, out=0] (1.9, 0.1); 
\draw [yshift=-10cm] (2.1, 0.2) to (2.27, 0.28); 
\draw [yshift=-10cm] (1, -0.8) to [in=205, out=0] (2.1, -0.4); 
\draw [yshift=-10cm] (2.3, -0.3) to (2.47, -0.22);

\draw  [yshift=-10cm] (3.5, -0.75) to [in=0, out=180] (1,1.7); 
\draw [yshift=-10cm]  (2.5, -1) to [in=0, out=120] (1,1.2); 
\draw [yshift=-10cm] (3.5,-0.75) to [in=180, out=0] (4.4,-0.75);
\draw [yshift=-10cm] (2.5,-1) to [in=120, out=300] (3.28,-4.2); 

\draw [yshift=-10cm] (4.2, 0.4) to [in=25,out=200] (2.7, -0.15);
\draw [yshift=-10cm] (4.05, 1) to [in=25,out=210] (2.5, 0.35);

\draw [yshift=-10cm] (6.5, -0.75) to (4.6,-0.75);

\draw [yshift=-10cm](7.8, 1.7) to [in=20,out=150] (4.5, 0.5);

\draw [very thick, dotted, red!60, yshift=-5cm] (2.5,-1) to [in=120, out=300] (3.9,-3.4); 
\draw [very thick, dotted, red!60, xshift=0.1cm, yshift=-4cm] (4.5,-1.9) to[in=135,out=315] (5.18,-3.2);

\begin{scope}[yshift=-0.5cm]

\draw [thick] (-1,3.4) circle (0.5); 
\draw [thick] (12.5,3.1) circle (0.5); 

    \node at (-1.05,3.4) {\tiny{${\ }_{s(a_1)=v}$}};
    \node at (12.45,3.1)  {\tiny{${\ }_{t(a_2)=v}$}};


\draw [thick] (-1,0) circle (0.5); 
\draw [thick] (-1,2) circle (0.5); 
\node at (-1,0) {\tiny{$h_2$}};
\node at (-1,2) {\tiny{$h_1$}};
\draw [thick] (12.5,0) circle (0.5); 
\draw [thick] (12.5,2) circle (0.5);  
\node at (12.5,0) {\tiny{$h_2$}};
\node at (12.5,2) {\tiny{$h_1$}};

\draw (9,2.25) [in=90, out= 180] to (7,1);
\draw (9,1.75) [in=330, out=180] to (8, 2);
\draw (7,1) [in= 150, out=270] to (7.75,0.1);
\draw (9,-0.25) [in=330, out=180] to (8, 0);
\draw (9,0.25) [in=0, out= 180] to (6.5,-0.25);

\draw [very thick, blue, dotted] (-0.5,3.55) to (1,3.55);  
\draw [very thick, blue, dotted] (-0.5,3.37) to (1,3.37);  

\draw [very thick, blue, dotted] (11,3.2) to (9,3.2);  
\draw [very thick, blue, dotted] (11,2.9) to (9,2.9);  

\draw [blue] (11,2.9) to[in=180,out=0] (12, 3.2);
\draw [blue] (11,3.2) to[in=180,out=0] (12.05, 2.9);

\node at (11.5,3.4) {\tiny{$\zeta_{a_2}$}};

\draw [very thick, red, dotted] (1,2.2) to (-0.5,2.2);  
\draw [very thick, red, dotted] (1,1.7) to (-0.5,1.7);  

\draw [very thick, red, dotted] (11,2.25) to (9,2.25);  
\draw [very thick, red, dotted] (11,1.75) to (9,1.75);  

\draw [red] (11,2.25) to[in=180,out=0] (12.05, 1.75);
\draw [red] (11,1.75) to[in=180,out=0] (12.05, 2.25);

\draw [very thick, red, dotted] (1,0.2) to (-0.5,0.2);  
\draw [very thick, red, dotted] (1,-0.3) to (-0.5,-0.3);  

\draw [very thick, red, dotted] (11,0.25) to (9,0.25);  
\draw [very thick, red, dotted] (11,-0.25) to (9,-0.25);  

\draw [red] (11,0.25) to[in=180,out=0] (12.05, -0.25);
\draw [red] (11,-0.25) to[in=180,out=0] (12.05, 0.25);

\node at (11.5, 2.3) {\tiny{$\xi_2$}}; 
\node at (11.5, 0.3) {\tiny{$\xi_3$}}; 
\node at (8, 2.3) {\tiny{$x_2$}}; 
\node at (8, 0.3) {\tiny{$x_3$}}; 

\draw [thick] (-1,-3) circle (0.5); 
\draw [thick] (-1,-5) circle (0.5); 
\node at (-1,-3)  {\tiny{$h_3$}};
\node at  (-1,-5) {\tiny{$h_4$}};
\draw [thick] (12.5,-3) circle (0.5); 
\draw [thick] (12.5,-5) circle (0.5); 
\node at (12.5,-3) {\tiny{$h_3$}};
\node at  (12.5,-5) {\tiny{$h_4$}}; 

\draw (9,-2.75) [in=90, out= 180] to (7,-4);
\draw (9,-3.25) [in=330, out=180] to (8, -3);
\draw (7,-4) [in= 150, out=270] to (7.75,-4.9);
\draw (9,-5.25) [in=330, out=180] to (8, -5);
\draw (9,-4.75) [in=0, out= 180] to (6.5,-5.25);

\draw [very thick, red, dotted] (1,-2.8)  to (-0.5,-2.8);  
\draw [very thick, red, dotted] (1,-3.3)  to (-0.5,-3.3);  

\draw [very thick, red, dotted] (11,-2.75)  to (9,-2.75) ;  
\draw [very thick, red, dotted] (11,-3.25)  to (9,-3.25);  

\draw [red] (11,-2.75) to[in=180,out=0] (12.05, -3.25) ;
\draw [red] (11,-3.25) to[in=180,out=0] (12.05, -2.75) ;

\draw [very thick, red, dotted] (1,-4.8)  to (-0.5,-4.8);  
\draw [very thick, red, dotted] (1,-5.3)  to (-0.5,-5.3);  

\draw [very thick, red, dotted] (11,-4.75) to (9,-4.75);  
\draw [very thick, red, dotted] (11,-5.25) to (9,-5.25);  

\draw [red] (11,-4.75)  to[in=180,out=0] (12.05, -5.25);
\draw [red] (11,-5.25)  to[in=180,out=0] (12.05, -4.75);

\node at (11.5, -2.7) {\tiny{$\xi_6$}}; 
\node at (11.5, -4.7) {\tiny{$\xi_7$}}; 
\node at (8, -2.7) {\tiny{$x_6$}}; 
\node at (8, -4.7) {\tiny{$x_7$}};

 \draw [thick] (-1,-8) circle (0.5); 
    \draw [thick] (-1,-10) circle (0.5); 
    \node at (-1,-8)  {\tiny{$h_{2g-3}$}};
\node at (-1,-10)  {\tiny{$h_{2g-2}$}}; 
    \draw [thick] (12.5,-8) circle (0.5); 
    \draw [thick] (12.5,-10) circle (0.5);  
        \node at (12.5,-8)  {\tiny{$h_{2g-3}$}};
\node at (12.5,-10)   {\tiny{$h_{2g-2}$}}; 

\draw (9,-7.75) [in=90, out= 180] to (7,-9);
\draw (9,-8.25) [in=330, out=180] to (8, -8);
\draw (7,-9) [in= 150, out=270] to (7.75,-9.9);
\draw (9,-10.25) [in=330, out=180] to (8, -10);
\draw (9,-9.75) [in=0, out= 180] to (6.5,-10.25);

\draw [very thick, red, dotted] (1,-7.8)  to (-0.5,-7.8);  
\draw [very thick, red, dotted] (1,-8.3)  to (-0.5,-8.3);  

\draw [very thick, red, dotted] (11,-7.75)  to (9,-7.75) ;  
\draw [very thick, red, dotted] (11,-8.25)  to (9,-8.25);  

\draw [red] (11,-7.75) to[in=180,out=0] (12.05, -8.25) ;
\draw [red] (11,-8.25) to[in=180,out=0] (12.05, -7.75) ;

\draw [very thick, red, dotted] (1,-9.8)  to (-0.5,-9.8);  
\draw [very thick, red, dotted] (1,-10.3)  to (-0.5,-10.3);  

\draw [very thick, red, dotted] (11,-9.75) to (9,-9.75);  
\draw [very thick, red, dotted] (11,-10.25) to (9,-10.25);  

\draw [red] (11,-9.75)  to[in=180,out=0] (12.05, -10.25);
\draw [red] (11,-10.25)  to[in=180,out=0] (12.05, -9.75);

\node at (11.5, -7.6) {\tiny{$\xi_{4g-6}$}}; 
\node at (11.5, -9.6) {\tiny{$\xi_{4g-5}$}}; 
\node at (8, -7.6) {\tiny{$x_{4g-6}$}}; 
\node at (8, -9.6) {\tiny{$x_{4g-5}$}}; 

\node at (3.65, -8.3) {\tiny{$x_{4g-7}$}}; 
\node at (4.8, -9.2) {\tiny{$\xi_{4g-7}$}}; 

\node at (5, -10.1) {\tiny{$x_{4g-4}$}};

    \draw [thick] (-1,-13) circle (0.5); 
    \draw [thick] (-1,-15) circle (0.5); 
            \node at (-1,-13)  {\tiny{$h_{2g-1}$}};
\node at  (-1,-15)    {\tiny{$h_{2g}$}}; 
    \draw [thick] (12.5,-13) circle (0.5); 
    \draw [thick] (12.5,-15) circle (0.5);  
           \node at (12.5,-13) {\tiny{$h_{2g-1}$}};
\node at   (12.5,-15)    {\tiny{$h_{2g}$}}; 

     \draw (9,-12.75) [in=90, out= 180] to (7,-14);
\draw (9,-13.25) [in=330, out=180] to (8, -13);
\draw (7,-14) [in= 150, out=270] to (7.75,-14.9);
\draw (9,-15.25) [in=330, out=180] to (8, -15);
\draw (9,-14.75) [in=0, out= 180] to (6.5,-15.25);

\draw (6.5, -15.25) to [in=0, out=180] (3.1, -12.7);

\draw [very thick, red, dotted] (1,-12.7)  to (-0.5,-12.7);  
\draw [very thick, red, dotted] (1,-13.4)  to (-0.6,-13.4);  

\draw [very thick, red, dotted] (11,-12.75)  to (9,-12.75) ;  
\draw [very thick, red, dotted] (11,-13.25)  to (9,-13.25);  

\draw [red] (11,-12.75) to[in=180,out=0] (12.05, -13.25) ;
\draw [red] (11,-13.25) to[in=180,out=0] (12.05, -12.75) ;

\draw [very thick, red, dotted] (1,-14.6)  to (-0.6,-14.6);  
\draw [very thick, red, dotted] (1,-15.2)  to (-0.5,-15.2);  

\draw [very thick, red, dotted] (11,-14.75) to (9,-14.75);  
\draw [very thick, red, dotted] (11,-15.25) to (9,-15.25);  

\draw [red] (11,-14.75)  to[in=180,out=0] (12.05, -15.25);
\draw [red] (11,-15.25)  to[in=180,out=0] (12.05, -14.75);

\end{scope}

\draw [yshift=-15cm](7.8, 1.7) to [in=20,out=150] (5.6, -0.3);
\draw [yshift=-15cm](5.3, -0.4) to [in=0,out=200] (1, -0.7); 
\draw [yshift=-15cm](4.8, 0.2) to [in=0,out=220] (1, -0.1); 
\draw [yshift=-15cm](5, 0.4) to [in=225, out=40] (5.3,1.7);

\node at (5.5, 0.4) [yshift=-15cm] {\tiny{$x_{4g-3}$}};
\node at (5.0, -0.15) [yshift=-15cm] {\tiny{$\xi_{4g-3}$}};

\draw[yshift=-15cm](1,1.1) to (3,1.1);
\draw[yshift=-15cm](1,1.8) to (2.8,1.8);

\node at (2.9,1) [yshift=-15cm] {\tiny{$\xi_{4g}$}}; 
\node at (2.65,2.2) [yshift=-15cm] {\tiny{$x_{4g}$}}; 

\node at (11.5, -13.1) {\tiny{$\xi_{4g-2}$}}; 
\node at (11.5, -15.1) {\tiny{$\xi_{4g-1}$}}; 
\node at (8.0, -13.1) {\tiny{$x_{4g-2}$}}; 
\node at (8.0, -15.1) {\tiny{$x_{4g-1}$}};

\draw[dashed] (1,3.4) to (9,3.4);
\draw[dashed] (1,3.4) to (1,-6.2);
\draw[very thick, dotted, red!60] (1,-6.2) to (1,-7.7);
\draw[dashed] (1,-7.7) to (1,-16);

\draw[dashed] (9,3.4) to (9,-6.2);
\draw[very thick, dotted, red!60] (9,-6.2) to (9,-7.7);
\draw[dashed] (9,-7.7) to (9,-16);

\draw[dashed] (1,-16) to (9,-16);

    \end{tikzpicture}
\caption{The green part of the 2-handle of $T^*\Sigma_g$ is analogous to Fig.~(\ref{fig6})}
  \label{thebeast} 
\end{figure} 

The following lemma suffices to generalise the quasi-isomorphism between $\mathscr{B}_{\Gamma, {\mathbf g}}$ and $\mathscr{L}_{\Gamma, {\mathbf g}}$ proved in Thm.~(\ref{derived}) for $\mathbf{g}=0$. 

\begin{lem}
The Chekanov-Eliashberg DG-algebra $\mathscr{B}_{\Gamma, {\mathbf g}}$ of the Legendrian link $L_{\Gamma, {\mathbf g}}$ associated
    to a graph $\Gamma$ and an $s$-tuple $\mathbf{g}$ is quasi-isomorphic to the DG-algebra generated by 
\begin{itemize}
	\item generators $\tau_v$, $t_v^\pm$ for each vertex $v$,
	\item generators $\zeta_a$, $z_a^\pm$ for each arrow $a$ in $\Gamma \backslash T$, 
	\item generators $c_a$, $c_a^*$ for each arrow $a$ in $\Gamma$,
	\item generators $\alpha^\pm_{v,i}$,  $\beta^\pm_{v,i}$ for each vertex $v$, $i=1, \dots , g_v$
\end{itemize}
with $|\zeta_a|=|\tau_v|=-1$, $|t_v^\pm|=|z_a^\pm|=|c_a|=|c_a^*|=|\alpha^\pm_{v,i}|=|\beta^\pm_{v,i}|=0$ and nontrivial differentials
\begin{align*}
\d \zeta_a & = e_v+c_a^*c_a-z_a \\
 \d \tau_v&= \prod_{t(a)=v} (e_v+c_ac_a^*) -t_v \prod_{i=1}^{g_v} [\alpha_{v,i}, \beta_{v,i}]\prod_{s(a)=v, \ a\notin T} z_a \prod_{s(a)=v, \ a\in T}  (e_v + c_a^*c_a)
 \end{align*}
where $[x, y ]=xyx^{-1}y^{-1}$.
\end{lem}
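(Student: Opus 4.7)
The plan is to reduce $\mathscr{P}_{\Gamma, \mathbf{g}}$ (which, by the initial simplifications noted above, is quasi-isomorphic to $\mathscr{B}_{\Gamma,\mathbf{g}}$) to the DG-algebra of the statement through a sequence of elementary automorphisms and destabilisations of cancelling pairs of generators, following the pattern of the $\mathbf{g}=0$ case. At each vertex $v$ with $g_v > 0$, the goal is to eliminate all the auxiliary generators $\xi_{v,j}$ and $x_{v,j}$ for $j=1,\ldots,4g_v$, retaining only $z_{v,i}^{\pm}$ for $i=1,\ldots,2g_v$, which will play the roles of $\alpha_{v,i}^{\pm}$ and $\beta_{v,i}^{\pm}$ after renaming.

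The first phase handles the pairs $(\xi_{v,4k-2}, x_{v,4k-2})$ and $(\xi_{v,4k-1}, x_{v,4k-1})$, whose differentials $\d\xi_{v,4k-2} = x_{v,4k-2} - z_{v,2k-1}$ and $\d\xi_{v,4k-1} = x_{v,4k-1} - z_{v,2k}$ are linear in the $x$-generators. The elementary shifts $x_{v,4k-2} \mapsto x_{v,4k-2} + z_{v,2k-1}$ and $x_{v,4k-1} \mapsto x_{v,4k-1} + z_{v,2k}$ isolate cancelling pairs, which are then destabilised; in the remaining differentials this has the effect of substituting $x_{v,4k-2} \mapsto z_{v,2k-1}$ and $x_{v,4k-1} \mapsto z_{v,2k}$ throughout.

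The second phase removes the pairs $(\xi_{v,4k}, x_{v,4k})$ and $(\xi_{v,4k-3}, x_{v,4k-3})$ one handle at a time, working from the innermost index $k=g_v$ outward. After the Step-1 substitutions, each surviving relation $\d\xi$ is of the form (unit in $z$)$\cdot x$ $-$ (word in already-eliminated variables), so an elementary change of the corresponding $x$-generator turns it into a linear differential and the cancelling pair disappears. The cumulative substitutions express $x_{v,1}$ as an explicit word $W_v$ in the surviving $z_{v,i}^{\pm}$, and a final elementary automorphism $\tau_v \mapsto \tau_v + (\mbox{correction})\cdot\xi_{v,1}\cdot(\mbox{correction})$ pushes this word into $\d\tau_v$ while destabilising the pair $(\xi_{v,1}, x_{v,1})$. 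An induction on $g_v$ (matching the recursive structure of $\d\xi_{v,4k-3}$ and $\d\xi_{v,4k}$, or equivalently observing that the Legendrian of Fig.~\ref{thebeast} is a band-sum of $g_v$ copies of the $T^*T^2$ picture Fig.~\ref{gompf} whose $2$-handle attaches along the standard surface relation $\prod_i [a_i,b_i]$) then identifies
\[
W_v = \prod_{i=1}^{g_v} [\alpha_{v,i}, \beta_{v,i}]
\]
once the surviving $z_{v,2i-1}^{\pm}, z_{v,2i}^{\pm}$ are renamed to $\alpha_{v,i}^{\pm}, \beta_{v,i}^{\pm}$ with signs and orderings dictated by the orientation choices in the diagram.

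The main obstacle is the identification of $W_v$ with the stated product of commutators. This requires careful bookkeeping of the chain of elementary automorphisms, and is most cleanly handled inductively: the base case $g_v=1$ is a direct computation, while the inductive step uses that the relations $\d\xi_{v,4k-3}$ and $\d\xi_{v,4k}$ precisely adjoin one further commutator $[\alpha_{v,k}, \beta_{v,k}]$ to the emerging word as $k$ decreases. The differentials involving $\zeta_a$, $c_a$, $c_a^*$ and the genus-$0$ part of $\d\tau_v$ are unchanged from $\mathscr{P}_{\Gamma,\mathbf{g}}$, so putting the vertex-by-vertex eliminations together yields the claimed quasi-isomorphism.
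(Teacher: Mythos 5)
Your proposal follows essentially the same route as the paper's proof: both destabilise the pairs $(\xi_{v,j},x_{v,j})$ by explicit elementary automorphisms fixing the non-free generators (with the innermost handle group treated specially), track the word that accumulates in place of $x_{v,1}$ in $\d\tau_v$ until it becomes a product of $g_v$ commutators in the surviving $z_{v,i}^{\pm}$, and finish by relabelling these as $\alpha_{v,i}^{\pm},\beta_{v,i}^{\pm}$. The only real difference is that the paper executes the commutator bookkeeping you defer --- it writes the automorphisms in groups of four, from the largest index downward, and exhibits the resulting interleaved product $[z_2^{-1},z_1][z_6^{-1},z_5]\cdots$ before relabelling --- so the ``main obstacle'' you flag is precisely the content of the paper's argument, settled by the induction you outline.
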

\begin{proof}
We start with the presentation of $\mathscr{P}_{\Gamma, {\mathbf g}} \simeq \mathscr{B}_{\Gamma, {\mathbf g}}$ preceding the statement and (as in the proof of Lem.~(\ref{secondaries})) utilise an extended notion of stable tame isomorphism where all the elementary automorphisms are identity on the non-free generators $t_v^\pm, z_a^\pm, z^\pm_i$.
With the help of such automorphisms, we destabilise the pairs $(\xi_{v,j},x_{v,j})$ for all $j$. 
We do this in groups of four, starting with larger $j$ and moving our way down to $j=1$. 
Note that the treatment of the initial group is slightly different than the rest because of the unique feature of the lowest piece of Fig.~(\ref{thebeast}).
(In the rest of the proof, we fix $v$ and use the simplified notation $g$, $x_j, \xi_j$ and $z_j$ for $g_v$, $x_{v,j}, \xi_{v,j}$, and $z_{v,j}$, respectively.)

The first four automorphisms we apply are
$$x_{4g} \mapsto z_{2g-1}^{-1}(x_{4g}+e_v) , \ \ x_{4g-1} \mapsto x_{4g-1}+z_{2g}, \ \ x_{4g-2} \mapsto x_{4g-2}+z_{2g-1}$$
followed by
$$\xi_{4g-3} \mapsto \xi_{4g-3} + \xi_{4g-2} (x_{4g-1}+z_{2g}) + z_{2g-1} \xi_{4g-1} , \ \ \xi_{4g-4} \mapsto \xi_{4g-4} + x_{4g-3}z^{-1}_{2g-1}\xi_{4g}z_{2g-3}x_{4g-4}$$
then
$$x_{4g-3} \mapsto z^{-1}_{2g}(x_{4g-3}+z_{2g-1}z_{2g})$$
and finally
$$\xi_{4g-4}\mapsto \xi_{4g-4}+z^{-1}_{2g}\xi_{4g-3}z^{-1}_{2g-1}z_{2g-3}x_{4g-4}$$
After these automorphisms, the relevant differentials are changed as follows
\begin{align*}
\d \xi_{4g}  =&-x_{4g}, \ \ \d \xi_{4g-1}  =x_{4g-1}, \ \ \d \xi_{4g-2}  =x_{4g-2}, \ \ \d \xi_{4g-3} =-x_{4g-3} \\
\d \xi_{4g-4} &= e_v-z^{-1}_{2g}z_{2g-1}z_{2g}z^{-1}_{2g-1}z_{2g-3}x_{4g-4}=e_v-[z^{-1}_{2g}, z_{2g-1}]z_{2g-3}x_{4g-4}
\end{align*}
and hence we can destabilise the pairs $(\xi_i,x_i)$ for $i=4g-3, \dots , 4g$.

The next four automorphisms will help destabilise the next four pairs $(\xi_i,x_i)$ of generators and establish a pattern. 
These are
$$x_{4g-4} \mapsto z^{-1}_{2g-3} [ z_{2g-1},z^{-1}_{2g}](x_{4g-4}+e_v) , \ \ x_{4g-5} \mapsto x_{4g-5}+z_{2g-2}, \ \ x_{4g-6} \mapsto x_{4g-6}+z_{2g-3}$$
followed by
\begin{align*}
\xi_{4g-7} \mapsto  \xi_{4g-7} &+ \left(\xi_{4g-6} (x_{4g-5}+z_{2g-2})+ z_{2g-3} \xi_{4g-5}\right) z^{-1}_{2g-3} [ z_{2g-1},z^{-1}_{2g}](x_{4g-4}+e_v)  \\
& - z_{2g-3}z_{2g-2}z^{-1}_{2g-3}[ z_{2g-1},z^{-1}_{2g}]\xi_{4g-4}
\end{align*}
then
$$x_{4g-7} \mapsto z^{-1}_{2g-2}\left(x_{4g-7}+z_{2g-3}z_{2g-2}z^{-1}_{2g-3}[ z_{2g-1},z^{-1}_{2g}]\right)$$
and
$$\xi_{4g-8}\mapsto \xi_{4g-8}+z^{-1}_{2g-2}\xi_{4g-7}z_{2g-5}x_{4g-8}$$
These automorphisms result in the following differentials
\begin{align*}
\d \xi_{4g-4}  =&-x_{4g-4}, \ \ \d \xi_{4g-5}  =x_{4g-5}, \ \ \d \xi_{4g-6}  =x_{4g-6}, \ \ \d \xi_{4g-7} =-x_{4g-7} \\
\d \xi_{4g-8} &= e_v-[z^{-1}_{2g-2}, z_{2g-3}][ z_{2g-1},z^{-1}_{2g}]z_{2g-5}x_{4g-8}
\end{align*}
Applying $(g-2)$ additional sets of four automorphisms analogous to the last four above, we destabilise all the pairs $(\xi_i,x_i)$.
This turns the differential of $\tau_v$ into
$$\d \tau_v = \prod_{t(a)=v} (e_v+c_ac_a^*) -t_v P_C \prod_{s(a)=v, \ a\notin T} z_a \prod_{s(a)=v, \ a\in T}  (e_v + c_a^*c_a)$$
where $P_C$ is the product of commutators 
$$[z^{-1}_2, z_1] [z^{-1}_6, z_5] \cdots  [z^{-1}_{2g-2}, z_{2g-3}] [ z_{2g-1},z^{-1}_{2g}] \cdots [ z_{7},z^{-1}_{8}][ z_{3},z^{-1}_{4}] \text{\ if } \ g \ \text{is even, and}$$ 
$$[z^{-1}_2, z_1] [z^{-1}_6, z_5] \cdots  [z^{-1}_{2g}, z_{2g-1}] [ z_{2g-3},z^{-1}_{2g-2}] \cdots [ z_{7},z^{-1}_{8}][ z_{3},z^{-1}_{4}] \text{\ if } \ g \ \text{is odd}$$ 
In order to match this last differential with the expression in the statement (which makes it easier to see the quasi-isomorphism $\mathscr{B}_{\Gamma, {\mathbf g}} \simeq \mathscr{L}_{\Gamma, {\mathbf g}}$), we simply relabel the generators $z_i^\pm$ so that
$$\alpha_{v,i}=z_{4i-2}^{-1}, \ \ \alpha_{v,j}=z_{4g-4j+3}, \ \ \beta_{v,i}=z_{4i-3}, \ \ \beta_{v,j}=z^{-1}_{4g-4j+4}, 
\ \text{ for } \ 1\leq i\leq \frac{g+1}{2} <j \leq g$$
\end{proof}

\noindent {\it Proof of Thm.~(\ref{mainthm})}:  With the above lemma established, we proceed as in the proof of Thm.~(\ref{derived})
and consider the following isomorphism $\varphi: \mathscr{P}_{\Gamma, \mathbf{g}} \to
\mathscr{L}_{\Gamma,\mathbf{g}}$ of DG-algebras defined to be the identity on all primary generators except for 
$$\tau_v \mapsto \tau_v  + t_v  \prod_{i=1}^{g_v} [\alpha_{v,i}, \beta_{v,i}]\left( \sum_{i=1}^{\iota_v} z_{a_1}  \cdots z_{a_{i-1}}\zeta_{a_i} (e_v + c_{a_{i+1}}c^*_{a_{i+1}})\cdots (e_v + c_{a_{\iota_v}}c^*_{a_{\iota_v}})\hspace{-0.1cm}\right)\hspace{-0.2cm} \prod_{s(a)=v, a\in T} \hspace{-0.3cm} (e_v + c_a^*c_a) $$
where $\{ a_1, \ldots , a_{\iota_v}\} $ is the (ordered) set of arrows with $s(a_i)=v$ and $a_i\in \Gamma \setminus T$.
\qed

\begin{rmk}
Throughout the paper, we only considered plumbings with positive intersections. 
In the case of negative plumbings, the Chekanov-Eliashberg DG-algebra of the corresponding Legendrian link can be described by the same set of generators as above but the differential gets modified as follows
$$ \d \tau_v = \prod_{t(a)=v,  sgn(a)=+}(e_v + c_ac_a^*) - t_v \prod_{i=1}^{g_v} [\alpha_{v,i},\beta_{v,i}] \prod_{t(a)=v, sgn(a)=-}(e_v + c_ac_a^*) \prod_{s(a)=v} (e_v + c_a^*c_a), $$
where $sgn(a)$ denotes the sign of the plumbing associated to the arrow $a$. 
Importantly, the DG-algebra is no longer non-positively graded since $c_a$ and $c^*_a$ have gradings $\pm 1$ whenever $sgn(a)=-$.
\end{rmk}

\end{document}